\newtheorem{thm}{Theorem}       
\newtheorem{lemma}{Lemma}       \newtheorem{coro}{Corollary}
\let\paragraph\subsection
\title{On a theorem of Grove and Searle}
\author{Oliver Knill}
\date{06/21/2020}
\address{Department of Mathematics \\ Harvard University \\ Cambridge, MA, 02138 }
\begin{document}

\begin{abstract}
A theorem of Grove and Searle directly establishes that positive curvature $2d$ 
manifolds $M$ with circular symmetry group of dimension $2d \leq 8$ have positive Euler 
characteristic $\chi(M)$: the fixed point set $N$ consists of even dimensional positive curvature
manifolds and has the Euler characteristic $\chi(N)=\chi(M)$. It is not empty by Berger. If
$N$ has a co-dimension $2$ component, Grove-Searle forces $M$ to be in
$\{ \mathbb{RP}^{2d},\mathbb{S}^{2d},\mathbb{CP}^d \}$. By Frankel, there can be not two codimension $2$ cases.
In the remaining cases, Gauss-Bonnet-Chern forces all to have positive Euler characteristic. 
This simple proof does not quite reach the record $2d \leq 10$ which uses methods of Wilking
but it motivates to analyze the structure of fixed point components $N$ and
in particular to look at positive curvature manifolds which admit a $U(1)$ or $SU(2)$ symmetry with
connected or almost connected fixed point set $N$. They have amazing geodesic properties: 
the fixed point manifold $N$ agrees with the caustic of each of its points and the geodesic flow is 
integrable. In full generality, the Lefschetz fixed point property $\chi(N)=\chi(M)$ and Frankel's dimension theorem
${\rm dim}(M)< {\rm dim}(N_k) + {\rm dim}(N_l)$ for two different connectivity components of $N$
produce already heavy constraints in building up $M$ from smaller components. It is 
possible that $\mathbb{S}^{2d},\mathbb{RP}^{2d},\mathbb{CP}^{d},\mathbb{HP}^{d},\mathbb{OP}^2,
W^6,E^6,W^{12},W^{24}$ are actually a complete list of even-dimensional positive curvature manifolds admitting a 
continuum symmetry. Aside from the projective spaces, the Euler characteristic of the known cases
is always $1,2$ or $6$, where the jump from $2$ to $6$ happened with the Wallach or Eschenburg manifolds 
$W^6,E^6$ which have four fixed point components $N=S^2 + S^2 + S^0$, the only known case which 
are not of the Grove-Searle form $N=N_1$ or $N=N_1 + \{p\}$ with connected $N_1$. 
\end{abstract}
\maketitle

\section{Positive curvature manifolds}

\paragraph{}
$\mathcal{M}$ is the class of compact Riemannian manifolds admitting
a {\bf positive curvature} metric. $\mathcal{K}$ denotes the subset of $\mathcal{M}$ which allow a
metric containing $G=\mathbb{T}^1$ in the {\bf isometry group} and $\mathcal{P} = \{ M \in \mathcal{M}, \chi(M)>0 \}$ 
are the manifolds in $\mathcal{M}$ of {\bf positive Euler characteristic}. The
{\bf Hopf conjecture} $\mathcal{H}_{2d} \subset \mathcal{P}$
for all $d \geq 0$, is open in dimension $6$ or higher. The weaker statement 
$\mathcal{K}_{2d} \subset \mathcal{P}$ is currently known to be true in dimensions 
$\leq 10$ \cite{PuettmannSearle,RongSu2005},and relies on work of Grove-Searle \cite{GroveSearle}, 
and Wilking \cite{Wilking2003}. For cosmetic reasons we include
$0$-manifolds in $\mathcal{M}$ and $1$-manifolds into $\mathcal{M},\mathcal{K}$.

\paragraph{}
Not a single element in $\mathcal{M} \setminus \mathcal{K}$ is known. 
In six dimensions, $\{\mathbb{S}^{6},\mathbb{P}^{6},
\mathbb{CP}^3,W^6,E^6 \}$ represent all known connected topology 
types in $\mathcal{M}_6$, where $W^6,E^6$ are the Wallach \cite{Wallach1972} 
and Eschenberg manifolds \cite{Eschenburg1982}.
All these examples are also in $\mathcal{K}_6$. The full classification of 
$\mathcal{K}_4 = \{ \mathbb{S}^4,\mathbb{P}^4,\mathbb{CP}^2\}$ was given by
Hsiang-Kleiner \cite{HsiangKleiner}. 
The statement $\mathcal{K}_{2d} \subset \mathcal{P}$ appears
for $2d=6$ in \cite{Petersen3} (2. Edition, Cor. 8.3.3), the cases $2d=8$ is also covered by
\cite{PuettmannSearle}, the case $2d=10$
was established by Rong-Su \cite{RongSu2005}. 
For more on the subject of positive curvature manifolds see \cite{Ziller,WilkingZiller,
Kennard2013,AmannKennard, Grove2017,Berger2002} for overviews.

\paragraph{}
For $M \in \mathcal{K}$, the {\bf fixed point set} $N=\phi(M)$ of the isometry group is a union 
of totally geodesic sub-manifolds, each having even co-dimension in $\mathcal{M}$. (For $3$-manifolds, 
$N$ can be a circle bouquet which has the circle as the universal cover.)
The Lefshetz fixed point theorem implies that the {\bf Conner-Kobayashi map} $\phi$ \cite{Conner1957,Kobayashi1958}
satisfies $\chi(\phi(N)) = \chi(M)$. The manifold $N=\phi(M)$ can have components of mixed dimension 
but each is a positive curvature manifold sitting in $M$ and is geodesic in the sense 
that a geodesics in $N$ is also a geodesic in $M$. In particular, components can consist of points.
See \cite{Kobayashi1972} or \cite{Petersen3} Chapter 8. We focus in the following mostly on even dimensions
as there the big open Hopf problem is open in dimension $d$ larger than $10$ in the case with symmetries.

\section{The Grove Searle Theorem}

\paragraph{}
The {\bf Grove and Searle theorem} is that if $\phi(N)$ is connected of co-dimension $2$,
then $M$ is a {\bf spherical space-form}: $\mathcal{M}_{GS} = \{ \mathbb{S}^n, 
\mathbb{RP}^n,\mathbb{S}^{n}/\mathbb{Z}_m \}$. In even dimensions, these are the spheres 
$\mathbb{S}^{2d}$ or real projective spaces $\mathbb{RP}^{2d}$. 
In odd dimensions, one has $M=\mathbb{S}^{2d+1}/\mathbb{Z}_m$  with $\pi_1(M)=\mathbb{Z}_m$.
They are manifolds for $d>0$ and circle bouquet orbifolds $\mathbb{S}^1/\mathbb{Z}_m$ for $d=0$. 

\paragraph{}
Call $\phi(M)$ {\bf almost connected} if $\phi(M) = N \cup \{p\}$ with ${\rm dim}(N)>0$.
As ${\rm dim}(M)-{\rm dim}(N_k)$ is even for all components $N_k$ of $N$, 
the manifolds $N,M$ must be even-dimensional then.
The theorem of \cite{GroveSearle} is:

\begin{thm}[Grove Searle]
Assume $N$ has codimension $2$ in $M \in \mathcal{K}$. 
If $N=\phi(M)$ connected or $N=\mathbb{S}^0$ then $M$ is a spherical space form. 
If $M \in \mathcal{K}$ and $N$ is almost connected, then $M=\mathbb{CP}^d$.
\label{Theorem1}
\end{thm}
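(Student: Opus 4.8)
The plan is to reconstruct $M$ from the codimension-$2$ component $F\subseteq N=\phi(M)$ and to induct on $\dim M$; the base case $\dim M\le 2$ is clear, since an isometric $\mathbb{T}^1$-action on a positively curved surface fixes a point or an $\mathbb{S}^0$, forcing $\mathbb{RP}^2$ or $\mathbb{S}^2$. The starting point is the local structure at $F$. Because $\mathbb{T}^1$ fixes $F$ pointwise and rotates the oriented rank-$2$ normal bundle $\nu F$, the differential at $p\in F$ of any $g\in\mathbb{T}^1$ is $\mathrm{id}\oplus(\text{rotation})$ on $T_pM=T_pF\oplus\nu_pF$; this forces the curvature operators $R(u,v)$ with $u,v\in\nu_pF$ to preserve $\nu_pF$ (they are $\mathbb{T}^1$-equivariant with unit determinant on the $2$-plane $\nu_pF$), so $\nu_pF$ is curvature-invariant and $\Sigma_p:=\exp_p(\nu_pF)$ is a complete totally geodesic positively curved surface carrying a rotational $\mathbb{T}^1$-action fixing $p$; by Bonnet--Myers and Gauss--Bonnet, $\Sigma_p\cong\mathbb{S}^2$ or $\mathbb{RP}^2$. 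A minimizing segment from any $q\in M\setminus F$ to $F$ is perpendicular to $F$, so $q$ lies on the $\Sigma_p$ through its endpoint; hence $M=\bigcup_{p\in F}\Sigma_p$, and a continuity argument along the connected $F$ shows the type of $\Sigma_p$ is independent of $p$.

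Next I would locate the fixed set completely. Any $\mathbb{T}^1$-fixed $q\notin F$ lies on some $\Sigma_p$ and is a $\mathbb{T}^1$-fixed point of it; if $\Sigma_p\cong\mathbb{RP}^2$ there is none, while if $\Sigma_p\cong\mathbb{S}^2$ the only candidate is its second fixed point $p^\ast$, which one checks is genuinely $\mathbb{T}^1$-fixed in $M$. By Frankel's theorem (two disjoint totally geodesic submanifolds of a positively curved $M$ have dimensions summing to at most $\dim M-1$), any component of $N$ other than $F$ has dimension $\le 1$, hence is a point; and $N\setminus F$ is contained in the connected set $\{p^\ast:p\in F\}$, so it is a single point. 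Thus $N=F$, or $N=F\sqcup\{p^\ast\}$ (which for $\dim F=0$ is exactly $N=\mathbb{S}^0$, and in general is the almost connected case). Running critical-point theory for $\rho=d(\cdot,F)$ --- on each $\Sigma_p$ it is the intrinsic distance to $\Sigma_p\cap F$, with critical set only the $\mathbb{T}^1$-fixed points and the equidistant circle --- presents $M$ as a $\mathbb{D}^2$-tube about $F$ glued along $S\nu F$ to a tube about $\rho^{-1}(\max\rho)$: a tube about an exceptional $\mathbb{T}^1$-orbit when $N=F$, and the disk $\mathbb{D}^{\dim M}$ about $p^\ast$ when $N=F\sqcup\{p^\ast\}$.

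Finally one must recognize $M$ from this disk-bundle decomposition, and this is the main obstacle. When $N=F\sqcup\{p^\ast\}$ the boundary $S\nu F\to F$ bounds the disk $\mathbb{D}^{\dim M}$, so it is a homotopy sphere $\mathbb{S}^{2d-1}$; the Gysin sequence of this circle bundle forces $F$ to be a cohomology $\mathbb{CP}^{d-1}$, the inductive hypothesis together with the classification of circle bundles over it identifies $F\cong\mathbb{CP}^{d-1}$ with $S\nu F$ the Hopf bundle, and since the $\mathbb{T}^1$-action is standard near the isolated fixed point $p^\ast$ the gluing can only produce $M\cong\mathbb{CP}^d$. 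When $N=F$ the analogous Gysin-and-induction analysis identifies $F$ as a cohomology sphere or $\mathbb{RP}^{\dim M-2}$ and, re-gluing the two tubes, identifies $M$ as $\mathbb{S}^{2d}$ or $\mathbb{RP}^{2d}$ in even dimensions and as a lens space $\mathbb{S}^{2d+1}/\mathbb{Z}_m$ in odd dimensions, the $\Sigma_p\cong\mathbb{RP}^2$ subcase reducing to the $\mathbb{S}^2$ subcase in a suitable double cover. The hard step throughout is upgrading the cohomological identification of $F$ and $M$ to their smooth type and extracting $\pi_1(M)$ correctly from the bundle data; this is precisely where \cite{GroveSearle} must go beyond the soft topology of disk-bundle decompositions, through a careful equivariant induction on dimension.
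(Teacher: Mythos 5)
The route you take is structurally different from the paper's. The paper never invokes a totally geodesic surface through each $p\in F$; instead it forms the sphere-tubes $\Gamma(x,v)=G\cdot\gamma(x,v)$ (orbits of geodesics, not totally geodesic submanifolds), uses invariance of a circle-intersection number to show all tubes from $x$ land at a common $x'$, defines a return involution $\psi:N\to N$, deduces that every geodesic in $M$ is periodic with a common period $T$, and then recovers $M$ by covering it with two geodesic balls (plus one ball about $p$ and an inductive atlas in the almost connected case). Your argument instead runs critical-point theory for $d(\cdot,F)$, produces a union of two disk bundles, and identifies the pieces via Gysin sequences and induction on dimension — in spirit much closer to the original Grove--Searle article than to the geodesic-flow approach of this paper.

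However, the first step of your proposal has a genuine gap. You assert that $\Sigma_p:=\exp_p(\nu_pF)$ is a complete totally geodesic surface. The $\mathbb{T}^1$-equivariance of $R$ does establish that $\nu_pF$ is curvature-invariant \emph{at} $p$ — the $T_pF$-component of $R(u,v)w$ is rotation-invariant with a fixed free rotation parameter, hence zero — but Cartan's criterion for $\exp_p(V)$ to be totally geodesic requires the parallel translate of $V$ to remain curvature-invariant at \emph{every} point along the emanating geodesics, not merely at the base point. At $q=\exp_p(tv)$ with $t>0$ the point $q$ lies on the principal stratum, the $\mathbb{T}^1$-isotropy at $q$ is trivial, and the equivariance argument gives no constraint. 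With the totally geodesic property unavailable, $\Sigma_p$ need not be a smooth closed surface (self-intersections past conjugate points are not ruled out), need not inherit positive intrinsic curvature, and the dichotomy $\Sigma_p\cong\mathbb{S}^2$ or $\mathbb{RP}^2$, the continuity-of-type argument, and the identification of $N\setminus F$ with $\{p^\ast\}$ all lose their foundation. You would either need an independent argument (e.g.\ convexity or transversality as in Grove--Searle's actual treatment) for the structure of the slices, or drop the totally geodesic claim and work directly with the distance function to $F$, which is in fact what makes the disk-bundle decomposition go through without ever needing $\Sigma_p$ to be geodesic.
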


\paragraph{}
The almost connected case is also of interest in {\bf K\"ahler geometry}:
$\mathbb{CP}^{d}$ admits $\mathbb{T}^1$ actions with $N=\mathbb{CP}^{d-1} \cup \{p\}$.
For the complex projective plane $d=2$ for example, we can work with equivalence classes of complex lines in 
$\mathbb{C}^3$ and have $T_s(z_1,z_2,z_3) = (e^{is} z_1, e^{is} z_2,z_3\}$ which fixes the projective line
$\mathbb{CP}^1 = \{ (z_1,z_2,0) \}$ at infinity and the point $p=(0,0,1)$. For K\"ahler manifolds, where
$\mathcal{K}_{{\bf Kaehler}}=\mathcal{M}_{{\bf Kaehler}}$ is even more likely,
it is known that $\mathcal{K}_{6,{\bf Kaehler}} = \{ \mathbb{CP}^3 \}$ \cite{Johnson1984}. 
The statement $\mathcal{K}_{2d,{\bf Kaehler}} = \{ \mathbb{CP}^d \}$ is a conjecture of Frankel \cite{Frankel1961}.

\paragraph{}
Besides the spheres $\mathbb{S}^{2d}$, the infinite families of real,complex or quaternionic
projective spaces $\mathbb{RP}^{2d},\mathbb{CP}^{d},\mathbb{HP}^d$, only
$\mathbb{W}^6,\mathbb{E}^6,\mathbb{W}^{12},\mathbb{OP}^2 \in \mathcal{M}_{16}$ 
and $\mathbb{W}^{24}$ are known in even dimensions \cite{Ziller2}. The {\bf octionic projective plane} $\mathbb{OP}^2$
is also called Moufang-Cayley plane named after Ruth Moufang who introduced it in 1933 and
Arthur Cayley who introduced octonions in 1845. 
It is remarkable that the list $\mathbb{S}^{8}=\mathbb{OP}^1, \mathbb{OP}^2$ stops there. 
There is no $3$-dimensional (over $\mathcal{O}$) octonion projective space. 
All these positive curvature manifolds have circular symmetry. 
See \cite{Shankar1999} for lists of isometry groups. 

\paragraph{}
Besides the circle $U(1)$ which is the unit sphere in $\mathbb{C}$, the unit sphere $SU(2)$
in $\mathbb{H}$ is a natural candidate for a symmetry as this is the only Lie group which is
a sphere and so has positive curvature. The analogue question is then which manifolds in $\mathcal{M}$
allow for a metric such that there is an effective $SU(2)$ action for which the fixed point set
is connected of co-dimension $4$ or almost connected of co-dimension $4$. 
The connected case seems to happen only for $\mathbb{S}^{2d} \to \mathbb{S}^{2d+4}$ extensions
which can also be done using two $U(1)$ extensions. The almost connected case
appears with $\mathbb{HP}^d \to \mathbb{HP}^{d+1}$. 

\section{Positive Euler characteristic in dimension 8 or less}

\paragraph{} 
Theorem \ref{Theorem1} immediately implies

\begin{coro}
For $2d \leq 8$, we have $\mathcal{K}_{2d} \subset \mathcal{P}$.
\end{coro}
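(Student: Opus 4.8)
The plan is a direct dissection of the fixed point set, with no induction. Fix $M \in \mathcal{K}_{2d}$ with $2d \in \{2,4,6,8\}$ and a positive-curvature metric whose isometry group contains $\mathbb{T}^1$; write $N=\phi(M)=N_1 \sqcup \cdots \sqcup N_r$ for the decomposition of the fixed point set into connected components. The case $2d=2$ is immediate, since a closed positively curved surface is $\mathbb{S}^2$ or $\mathbb{RP}^2$ by Gauss-Bonnet, so assume $2d \in \{4,6,8\}$. I first invoke the three inputs recalled in Section 1: by Berger's theorem the Killing field generating the circle has a zero, so $N \neq \emptyset$; each $N_k$ is a closed totally geodesic submanifold of even codimension, hence of even dimension, and inherits a positive-curvature metric; and $\chi(M)=\chi(N)=\sum_k \chi(N_k)$ by the Conner-Kobayashi/Lefschetz identity. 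It therefore suffices either to identify $M$ outright or to show $\chi(N_k)\geq 1$ for every $k$.

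Case 1: some component, say $N_1$, has codimension $2$. Frankel's dimension theorem — disjoint closed totally geodesic $N_k,N_l$ in a positively curved $M^{2d}$ satisfy $\dim N_k+\dim N_l<2d$ — rules out a second codimension-$2$ component (here $2d\geq 4$) and forces every other component to be a point. By the Grove-Searle theorem (Theorem~\ref{Theorem1}, read in the form quoted in the abstract: a codimension-$2$ fixed component forces $M\in\{\mathbb{S}^{2d},\mathbb{RP}^{2d},\mathbb{CP}^d\}$), $M$ is one of these three, with $\chi$ equal to $2$, $1$, $d+1$ respectively, so $M\in\mathcal{P}$. The point of routing through Grove-Searle is that it bypasses the Euler characteristic of $N_1$ itself, which for $2d=8$ would be a positively curved $6$-manifold, not directly known to have $\chi>0$.

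Case 2: every component has codimension $\geq 4$, so $\dim N_k\leq 2d-4\leq 4$, and each $N_k$ is a closed positively curved manifold of dimension $0$, $2$ or $4$. A point gives $\chi=1$; a surface is $\mathbb{S}^2$ or $\mathbb{RP}^2$ with $\chi\in\{1,2\}$; and a positively curved $4$-manifold has $\chi\geq 1$: by Synge's theorem its orientation double cover is simply connected, hence has $\chi=2+b_2\geq 2$ by Poincaré duality, and $N_k$ has half of this (this is the Gauss-Bonnet-Chern input the introduction alludes to). Summing, $\chi(M)=\sum_k\chi(N_k)\geq r\geq 1>0$, so $M\in\mathcal{P}$, finishing the proof.

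The step I expect to carry the real weight is the $4$-dimensional subcase of Case 2, and it is also where the method saturates: to close Case 2 one needs $\chi>0$ for closed positively curved manifolds up to dimension $2d-4$, which is classical for dimensions $0,2,4$ but is exactly the open Hopf problem in dimension $6$ — this is why the argument reaches $2d=8$ and not $2d=10$, where a codimension-$4$ component can be a positively curved $6$-manifold carrying no residual symmetry a priori, and where Wilking's connectedness methods are needed instead. The only other thing to be careful about is that Case 1 genuinely falls under Grove-Searle, i.e. that no isolated fixed points accompany a codimension-$2$ component beyond the single one allowed in the $\mathbb{CP}^d$ pattern; Frankel's inequality together with the global conclusion of Grove-Searle settles this.
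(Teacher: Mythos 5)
Your proof matches the paper's own argument in all essentials: Berger for non-emptiness, Conner--Kobayashi--Lefschetz for $\chi(M)=\chi(N)$, Grove--Searle to dispose of a codimension-$2$ component, Frankel to control the remaining configurations, and Gauss--Bonnet(-Chern) in dimensions $0,2,4$ for codimension $\geq 4$ components. One small variation that is a genuine improvement: the paper's dimension-$4$ input is the Milnor remark that the Gauss--Bonnet--Chern integrand is pointwise positive under positive sectional curvature, whereas you derive $\chi(N_k)\geq 1$ from Synge plus Poincar\'e duality on the orientation double cover --- both are correct, and yours is arguably more elementary. One caution: your final paragraph concedes that Frankel alone does not exclude a codimension-$2$ component accompanied by \emph{two or more} isolated points (indeed $0+0<2d$ poses no obstruction), and your proposed fix --- ``Frankel's inequality together with the global conclusion of Grove--Searle settles this'' --- is circular as written, since you cannot invoke the conclusion of Theorem~\ref{Theorem1} to verify its connectivity hypothesis. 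The honest resolution is that the original Grove--Searle fixed-point-homogeneity theorem takes only ``some component of $N$ has codimension $2$'' as hypothesis and delivers the connectivity of the full fixed point set as part of the conclusion; the paper's Theorem~\ref{Theorem1} is a slightly under-hypothesized restatement, and the paper's own proof inherits exactly the same gap you flagged.
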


\begin{proof}
Every connectivity component $N_k$ of $N=\phi(M)$ 
with ${\rm dim}(N_k)\leq 4$ has $\chi(N_k)>0$ by of Gauss-Bonnet-Chern \cite{Chern1966},
a remark by Milnor. By a theorem of Frankel, if there are two components, then the 
sum of their dimensions is smaller than the dimension of $M$ (\cite{Frankel1961} Theorem 1 p 169).
If there is a co-dimension $2$ component, then by Grove Searle, $M$ must be a sphere or 
real projective space so that $M$ has positive Euler characteristic. If all components
have components smaller than $6$ dimension, we don't even need a Frankel type result and
have all components of positive Euler characteristic because the Gauss-Bonnet-Chern theorem
in $2d=4$ or Gauss-Bonnet in $2d=2$ kicks in and points automatically have positive Euler
characteristic. 
\end{proof}

\paragraph{}
In dimension $2d=10$, there could be a $6$-dimensional component and nothing else
and now, neither Gauss-Bonnet nor Grove-Searle catches in. I think there should
still be a Grove-Searle result in codimenion $4$ but then include $\mathbb{HP}^d$
in the list. That would lead to a Grove-Searle type proof for $2d=10$. 
For now, the currently best result in dimension $10$ use also cohomology theorems 
of Wilking \cite{Wilking2003} which are sophisticated and of independent interest.
\footnote{Thanks to Wolfgang Ziller for the references, especially the work
of Grove-Searle and Wilking and explaining these cases and to Burkhard Wilking for
explaining a trickier part in the case $2d=10$. }

\paragraph{}
We are fascinated by the Grove-Searle theorem because it is an elegant approach
to prove positive Euler characteristic in cases of symmetry, at least for small
dimensions. I have explored then the question whether the codimension 2 assumption
can be replaced with higher co-dimension case for some time believed to be able to prove that one
a connected $N$ forces $M$ to be in $\mathbb{RP}^d$, $\mathbb{CP}^d$ or $\mathbb{S^d}$.
\footnote{Thanks also to Karsten Grove, Catherine Searle and Lee Kennard to 
look and correct rather wild guesses of mine in this context.}

\paragraph{}
Here is the example, the experts immediately pointed out: let $M=\mathbb{HP}^d$ be the
quaternionic projective space, which is a compact $4d$-dimensional manifold
of positive curvature and Euler characteristic $d+1$. The manifold consists
of all lines $s \to [z_1,z_2, \dots, z_d] s $ with quaternionic components $z_k$ and
quaternion time $s \in \mathbb{H}$. It admits the circular action $z \to e^{it} z$
of the circle where the multiplication is from the left. The fixed points are
exactly all the tuples $[z_1,z_2,\dots, z_d] s$ with complex components $z_k$,
meaning that $N=\mathbb{CP}^d$ is the fixed point set. It is connected.

\paragraph{}
The example proves also directly that $\chi(\mathbb{HP}^d) = \chi{\mathbb{CP}^d})$. 
By the way, one can also compute the Euler characteristic of $\mathbb{CP}^d$ like that: 
take the circle action $[z_1,z_2,\dots, z_d] \to [z_1,z_2,\dots, e^{it} z_d]$ which has the
fixed point set $\mathbb{CP}^{d-1}$ as well as a point $p=[0,0,0,\dots,0,1]$. Induction
then gives $\chi(\mathbb{CP}^d) = d+1$. We see that the Lefschetz
fixed point principle $\chi(M) = \chi(N)$ is handy to compute Euler characteristic.

\paragraph{}
Just to dwell a bit more on the Lefschetz fixed point principle:
in the case $\mathbb{RP}^{2d}$ which consists of equivalence classes $[x_0,x_1,\dots, x_{d}]$
one has a circle action 
$$t \to [ x_0, \cos(t) x_1, \sin(t) x_2, \cos(t) x_3, \sin(t) x_4,\dots \cos(t) x_{d-1}, \sin(t) x_d] $$
which has the fixed point $[1,0,0,\dots,0]$ only. The Euler characteristic is $1$. 
In the case $\mathbb{RP}^{2d}$ one can look at the same circle action which then has the 
fixed points $[1,0,0,\dots,0]$ and $[-1,0,\dots,0]$. The Euler characteristic is $2$. 

\paragraph{}
The manifolds $\mathbb{HP}^d$ can also be constructed using Grove-Searle expansion.
It uses the group $SU(2)$. It is completely analog to how
the extension $\mathbb{CP}^d \to \mathbb{CP}^{d+1}$ is done using a circular
action which leaves $\mathbb{CP}^d$ as well as a point invariant. 
If the same thing is done just using the unit sphere $SU(2)$ in the quaternions
rather than the unit sphere $U(1)$ in the complex plane, we have the same
Grove-Searle picture also for $\mathbb{HP}^d$. What was for me not visible at first is
that there is a circular extension from $\mathbb{CP}^d$ to $\mathbb{HP}^d$ which has
the connected fixed point set. It is possible since the orbits of the extensions
are then $2d$ spheres.

\section{A constructive approach to Grove Searle}

\paragraph{} 
In the Grove-Searle case of a co-dimension-$2$ component, it is possible to 
{\bf reconstruct} $M$ from $N$. It uses only properties of the {\bf geodesic flow}.
By Hopf-Rynov, $M$ is complete so that for $x \in M, v \in T_xM, |v|=1$, there is a 
{\bf geodesic path} $\gamma_t(x,v) \in M$. The {\bf cut locus} of $x$ is set $y \in M$ 
for which multiple minimizing geodesics connecting $x$ and $y$ exist.
The set of {\bf conjugate points} of $x$ is the set of $y$ such that a parametrized family
of geodesics connecting $x$ and $y$ exists.

\paragraph{}
{\bf Definition.} For $M \in \mathcal{K}$ with fixed point set $N$, a {\bf geodesic} $\gamma(x,v)$ with $x \in N$
either stays in $N$ or then leaves $N$ at $x$. If it returns to $N$ at $x'$, it defines a $2$-dimensional 
{\bf sphere tube} $\Gamma(x,v)=G\gamma(x,v)$ as the orbit of $\gamma(x,v)$ under the circle $G$ action. It
can not have a self intersection. For $v \to T_xN$ the tube degenerates to a geodesic path on $N$.
If the {\bf return time} $T(x,v)$ to $N$ is positive, then it is larger or equal than the 
{\bf radius of injectivity} of $M$ and smaller or equal than the {\bf diameter} of $M$.

\begin{lemma} 
For every $v$ in the tangent space $T_xM$ away from $T_xN$, the tube 
$\Gamma(x,v)$ is a $2$-sphere consisting of geodesics that start and end
at $N$ in finite time $T(x,v)$.
\end{lemma}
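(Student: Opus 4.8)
The plan is to exploit the $G=\mathbb{T}^1$ symmetry together with the fact that $N$ is the fixed-point set and has codimension $2$. Fix $x\in N$ and a unit vector $v\in T_xM$ with $v\notin T_xN$. First I would decompose $T_xM=T_xN\oplus \nu_x$ orthogonally, where $\nu_x$ is the $2$-dimensional normal space; since $N$ is totally geodesic and $G$ fixes $N$ pointwise, the isotropy representation of $G$ at $x$ acts trivially on $T_xN$ and by a nontrivial rotation on $\nu_x$ (nontrivial because $N$ is exactly the fixed set, so no larger subspace is fixed). Writing $v=v^{\top}+v^{\perp}$ with $v^{\perp}\neq 0$, the $G$-orbit of $v$ inside the unit sphere of $T_xM$ is a circle that rotates the $v^{\perp}$ component. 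I would then argue that it suffices to treat $v=v^{\perp}$, i.e. $v\in\nu_x$: if $v$ has a tangential part, the geodesic $\gamma(x,v)$ and its $G$-orbit are obtained from the $v\in\nu_x$ picture by a product-type argument, and for $v\in T_xN$ the tube collapses to the geodesic in $N$, which is the degenerate case already noted in the definition.

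The heart of the argument is the case $v\in\nu_x$. Here the key step is to show that $\gamma_t(x,v)$ returns to $N$ in finite time. This is where I expect the main obstacle to lie, and I would handle it via a Frankel/Wilking-type connectedness argument specialized to the normal sphere. Consider the $G$-orbit $G\cdot\gamma_t(x,v)$: for small $t$ this is a small circle transverse to $N$, and as $t$ increases it sweeps out a surface $\Gamma(x,v)$. The claim is that this surface closes up into an $\mathbb{S}^2$. One route: since $N$ has codimension $2$ and $G$ acts with fixed set exactly $N$, the quotient picture near $N$ looks like a disk bundle with circle fibers, and a geodesic leaving $x\in N$ normally must, by completeness (Hopf--Rinow, invoked in the excerpt) and by positivity of curvature (forcing conjugate points via Bonnet--Myers type estimates along the $2$-dimensional totally geodesic surface spanned by $\gamma$ and the Killing field generating $G$), come back to the fixed set; positive curvature bounds the return time above by the diameter and below by the injectivity radius, exactly as asserted. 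I would make the surface $\Gamma(x,v)$ precise as the image of the map $(s,t)\mapsto \exp_x$ applied appropriately, or more cleanly as the $G$-orbit of the single geodesic arc from $x$ to its first return point $x'$, and observe this is a smooth immersed surface away from the two endpoints where the circle orbits degenerate to points (since $x,x'\in N$ are $G$-fixed). A surface that is a circle bundle over an interval with both boundary circles collapsed is a $2$-sphere; this gives the topological identification.

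Finally I would rule out self-intersections of $\Gamma(x,v)$: a self-intersection would produce two distinct minimizing or non-minimizing geodesic segments between points of $M$ in a way incompatible either with the local structure of the normal exponential map (for short tubes) or, for the global closing-up, with the uniqueness of the totally geodesic $2$-surface through $x$ tangent to $\mathrm{span}(v,\text{Killing field})$ — two sheets meeting would violate totally-geodesic rigidity. The remaining bookkeeping — that $T(x,v)$ is the same for all geodesics in the tube (it is, by $G$-invariance of the return map), and that $T(x,v)\ge \mathrm{inj}(M)$ (because a geodesic returning to a point earlier than the injectivity radius would force a conjugate or cut point too soon) and $T(x,v)\le \mathrm{diam}(M)$ (trivially, as $d(x,x')\le\mathrm{diam}(M)$ and the arc is minimizing up to the first conjugate point) — is routine and I would state it without grinding through the constants. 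The one genuinely delicate point, which I would flag, is ensuring the \emph{first} return lands back on the \emph{same} component, or at least on $N$, rather than the geodesic wandering indefinitely; this is precisely where positive curvature (via a synthetic Rauch/focal-point comparison along the $2$-dimensional totally geodesic orbit surface) is indispensable and cannot be replaced by completeness alone.
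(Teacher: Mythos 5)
The overall strategy matches the paper's: the $G$-orbit of the geodesic $\gamma(x,v)$ is a family of geodesics emanating from $x$, positive curvature forces this family to focus at a conjugate point $x'$ (via the Jacobi field given by the Killing field restricted to $\gamma$), $x'$ lies in $N$ because the Killing field vanishes exactly on $N$, and the tube is a circle bundle over an interval with both boundary circles collapsed, hence an $\mathbb{S}^2$. You identify all of these key ingredients. However, there are three genuine problems. First, the lemma is stated for general codimension of $N$, yet you fix $\nu_x$ to be $2$-dimensional from the outset; the paper does not assume codimension~$2$ here (that hypothesis enters only in the later ``Codimension-2 case'' lemma), and nothing in the argument should rely on it. Second, your reduction to $v\in\nu_x$ via a ``product-type argument'' is not legitimate: a geodesic leaving $x$ with a nonzero tangential component has no product decomposition, and its tube, return point, and return time are all genuinely different from those of a purely normal geodesic. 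The paper simply treats an arbitrary $v\notin T_xN$ directly, since the argument (Killing field $\to$ Jacobi field vanishing at $0$ $\to$ refocusing by positive curvature $\to$ refocusing point must lie in $N$) never uses normality of $v$. Third, and most seriously, you twice invoke properties of ``the $2$-dimensional totally geodesic surface spanned by $\gamma$ and the Killing field.'' The tube $\Gamma(x,v)$ is not totally geodesic in general (the $T_s\gamma$ are geodesics of $M$, but the circle directions are not), so the Bonnet--Myers comparison must be run in $M$ along the $2$-planes $\mathrm{span}(J,\gamma')$ rather than intrinsically in $\Gamma$, and ``totally-geodesic rigidity'' cannot be used to exclude self-intersections. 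The paper's exclusion of self-intersections is quite different and does not need any such claim: for $z\in\Gamma(x,v)$, compare the unique minimizing geodesic from $x$ to $x'$ through $z$ with the $T_s\gamma$ through $z$, using that each $T_s$ is an isometry, so a shorter segment would pull back by $T_{-s}$ to a shorter segment along $\gamma$ itself, a contradiction. You should either reproduce an argument of that isometry/minimality type or find another route to embeddedness; the totally-geodesic route does not go through.
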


\begin{proof}
$\Gamma(x,v)$ is a ray of geodesics starting at $x$. In a positive curvature manifold it
has to close at a point $x'$ where the rays focus is a conjugate point of $x$.
We have $x' \in N$ because the radius of the sphere $C(z)$ is positive for every $z \neq N$. 
The tube $\Gamma(x,v)$ is the orbit $G \gamma(x,v) = \{ T_s \gamma(x,v), s \in G\}$ 
of the geodesic $\gamma(x,v)$ under the $G=\mathbb{T}^1$ action.
For $z \in \Gamma(x,v)$, there is unique geodesic segment $\overline{\gamma}$ connecting $x$ and 
$x'$ passing through $z$. Since $z$ is in $\Gamma(x,v)$ we also have $z \in \tilde{\gamma} = T_s \gamma(x,v)$ 
for some $s \in G$. We have $\tilde{\gamma} = \overline{\gamma}$ because if $\overline{\gamma}$
would be shorter, the curve segment $T_{-s} \overline{\gamma}$ (which has the same length as 
$T_t$ are isometries) would also be a shorter connection than $\gamma$ connecting $x$ with $z$.
\end{proof}

\paragraph{}
{\bf Definition.} Given two tubes $\Gamma(x,v)$ and $\Gamma(y,w)$ with $x \neq y$.
The {\bf intersection number} $I(x,v,y,w)$ is the number of circular intersections of $\Gamma(x,v)$ 
and $\Gamma(y,w)$. Each is an orbit of $G=\mathbb{T}^1$ and so homotopically nontrivial in the cylinder
$Z(x,v) = \Gamma(x,v) \setminus \{x,x'\}$ and $Z(y,w)  = \Gamma(y,w) \setminus \{y,y'\}$ meaning that they can not 
be continuously deformed to a point within $Z(x,v)$ or $Z(y,w)$. 

\paragraph{}
The intersection number $I$ is almost everywhere constant: 

\begin{lemma}
$I(x,v,y,w)$ is constant if $x \neq y$ or $x' \neq y'$ and $v \notin T_xN$ and $w \notin T_yN$.
\end{lemma}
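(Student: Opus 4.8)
The plan is to show that $I$ is locally constant on the domain $\mathcal{D}$ of quadruples $(x,v,y,w)$ satisfying the stated hypotheses (and, implicitly, lying off the positive–codimension set on which the two tubes happen to coincide), and then to observe that $\mathcal{D}$ is connected, so that $I$ is constant.

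First I would record that the tubes vary continuously with their data. By smooth dependence of the geodesic flow on initial conditions, together with continuity of the return time $T(x,v)$ (the first focal point along $\gamma(x,v)$ depends continuously on $(x,v)$, the sectional curvature of $M$ being bounded below by a positive constant and above by compactness), the assignment $(x,v)\mapsto\Gamma(x,v)$ is a continuous family of embedded $G$-invariant $2$-spheres, nondegenerate exactly on the locus $v\notin T_xN$; and $\Gamma(x,v)$, $\Gamma(y,w)$ are distinct generically on the locus ($x\neq y$ or $x'\neq y'$), since there the two geodesics have different unordered endpoint pairs and so, except on a thin set, cannot be $G$-translates of one another. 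It is convenient to pass to the orbit space $\overline{M}=M/G$, a $(2d-1)$-dimensional orbifold in which $N$ appears as a hypersurface $\overline{N}$ (its codimension $2$ in $M$ becomes codimension $1$ in $\overline{M}$ because $G$ fixes $N$) and each tube projects to a geodesic arc from $\overline{N}$ to $\overline{N}$; then $I(x,v,y,w)$ equals the number of interior intersection points of the two arcs $\overline{\Gamma(x,v)}$ and $\overline{\Gamma(y,w)}$, each such point being the image of one shared $G$-orbit circle.

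Next I would run the invariance argument for this arc count. Along a path in $\mathcal{D}$ the two arcs move by a continuous isotopy, their endpoints sliding along $\overline{N}$ but the arcs never coinciding and never collapsing; since every interior intersection point lies off $\overline{N}$ while every endpoint lies in $\overline{N}$, no intersection point can be lost or gained through an endpoint, so an interior intersection point can only appear or disappear by colliding with another. Orienting each tube by its flow direction together with its circle-orbit direction orients the arcs, equips each interior intersection point with a sign, and makes the signed total a homotopy invariant of the moving pair of arcs by the usual cobordism argument; in positive curvature every shared orbit circle is crossed with the same local sign, so the signed count equals $I$ (and failing coherent signs one at least gets $I\bmod 2$ invariant, which a short direct argument upgrades to the integer). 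Finally, $\mathcal{D}$ is connected: in the Grove--Searle situation $N$ is connected of codimension $2$, the unit normal data $\{(x,v):x\in N,\ |v|=1,\ v\notin T_xN\}$ fibers over $N$ with connected fiber $\mathbb{S}^{2d-1}\setminus\mathbb{S}^{{\rm dim}(N)-1}$, and removing from the product the positive-codimension loci where $x=y$ and $x'=y'$, or where the two tubes accidentally coincide, leaves it connected; hence $I$ is globally constant.

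The main obstacle is the invariance step. Two $2$-spheres in a manifold of dimension $2d\geq 4$ meet along a circle rather than transversally in isolated points, so the intersection has larger than the expected dimension and the naive cobordism-of-transverse-intersections argument does not literally apply; one must genuinely exploit the $G$-equivariance to reduce everything to the one-dimensional picture of arcs in $\overline{M}$, and even there the arcs are of excess dimension relative to $\overline{M}$, so one still has to show that two projected geodesic arcs meet cleanly and that no interior intersection point can be born or die as $\mathcal{D}$ is traversed — here I would use the focusing property of tubes from the preceding lemma together with positivity of the curvature. Carrying one of these two routes through rigorously, in particular handling the non-generic parameter values where transversality of the arcs in $\overline{M}$ fails and where one tube becomes a $G$-translate of the reverse of the other, is where the real work lies.
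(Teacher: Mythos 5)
Your plan is structurally the same as the paper's: both argue that $I$ is locally constant by continuity of the tubes in the parameters, identify the mechanisms by which the count could jump, exclude each, and then appeal to connectedness of the parameter domain. The orbit-space reformulation (passing to $\overline{M}=M/G$ so that tubes become arcs with endpoints on the hypersurface $\overline{N}$, and circles of intersection become interior points) is a clean reframing the paper does not use, and it makes the "slipping off through an endpoint'' mechanism especially transparent. The paper enumerates three jump mechanisms — (i) internal bifurcation of intersection circles, (ii) a circle sliding off at an endpoint $x$ or $x'$, (iii) the tube degenerating to a geodesic when $v\to T_xN$ — which match your birth/death at endpoints and degeneration cases.

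However, you explicitly leave the decisive step open. You write that ruling out birth and death of interior intersections as the parameters vary is "where the real work lies,'' and your fallback of replacing the integer count by a signed or mod-$2$ count does not by itself establish that the \emph{unsigned} count $I$ is constant, which is what the lemma asserts. The paper closes exactly this gap (its case (i)) with a quantitative claim: any two distinct intersection circles of $\Gamma(x,v)$ and $\Gamma(y,w)$ must be separated by a distance bounded below, on the order of the radius of injectivity. If that lower bound holds uniformly on a compact parameter set, then no pair of intersection circles can collide or appear from nothing, and the unsigned count cannot jump internally; combined with the exclusion of the endpoint and degeneration mechanisms by the hypotheses $x\neq y$ or $x'\neq y'$ and $v\notin T_xN$, $w\notin T_yN$, this gives local constancy of $I$ itself (not merely a signed invariant). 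To complete your argument along your own lines, you would need to supply such a uniform separation estimate — for instance, by observing that distinct intersection orbits lie at distinct parameter values $t_1\neq t_2$ along both geodesics $\gamma(x,v)$ and $\gamma(y,w)$, and arguing that if $t_1,t_2$ became arbitrarily close one would obtain two minimizing geodesic segments between nearby $G$-orbits violating the injectivity radius. As it stands, your proposal identifies the right obstructions but does not resolve them, whereas the paper asserts the resolving estimate outright.
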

\begin{proof}
If we change $x,y,v,w$, the circles change smoothly and especially their radius. There are 
three ways to change the number of circles, (i) through bifurcation, 
Or then (ii) by slipping off through the end, where the circle radius can become zero.
or then (iii) when $v$ or $w$ go towards $T_xN$ or $T_yN$, where tubes become geodesics.
Case (i) is impossible because two circles must have distance larger than the radius of injectivity. 
Case (ii) was excluded by avoiding $x=y$ or $x'=y'$ and (iii) is excluded by the condition
on the vectors $v,w$. 
\end{proof}

\paragraph{}
The same argument applies in the almost connected case as the circles must be bounded
away from the additional single fixed point $\{p\}$. 
For $M \in \mathcal{K}$ with connected or almost connected $N$, there 
is an integer intersection number $I$ which only depends on $M$. We do not yet know
yet how large the set is where $x=y,x' \neq y'$. It could be a sub-manifold. 

\paragraph{}
Given a point $x \in N$, let $x_n \in M \setminus N$ be a sequence of points converging
to $x$. The radius of the circles $G x_n$ goes to zero. Their osculating circles define
$2$-planes $\Sigma(x_n)$ in $T_{x_n}M$ which converge to a $2$-plane $\Sigma(x)$ in $T_xM$. 

\begin{lemma}[A planar bundle]
If $G=\mathbb{T}^1$ acts effectively on $M \in \mathcal{K}$, then the fixed point set $N$
carries a natural complex line bundle $L(x)$ defined by the velocity $F(x)$ and acceleration vectors $G(x)$ 
of the circles $Gy$ for points $x$ close to $N$.
\end{lemma}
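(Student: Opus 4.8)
\emph{Proof plan.} The plan is to identify the asserted bundle with the normal bundle $\nu N$ of $N$ in $M$, equipped with the complex structure induced by the linearised circle action; the velocity and acceleration of the small orbit circles are then just the first two terms in the Taylor expansion of that linearisation. Let $F$ denote the Killing vector field generating $G=\mathbb{T}^1$, so that $F$ vanishes identically on $N$. The first step is to pass to the endomorphism $A_x:=(\nabla F)_x$ of $T_xM$, which is skew-symmetric by the Killing equation and whose kernel is exactly $T_xN$: the inclusion $T_xN\subseteq\ker A_x$ holds because the flow $\phi_t$ of $F$ fixes $N$ pointwise, so $d\phi_t|_x=\exp(tA_x)$ fixes $T_xN$; the reverse inclusion $\ker A_x\subseteq T_xN$ holds because $\exp_x$ maps $\ker A_x\cap(\text{small ball})$ onto the fixed-point set near $x$, which is part of $N$. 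Hence $A_x$ restricts to an invertible skew-symmetric endomorphism $A_x^{\perp}$ of $\nu_xN=(T_xN)^{\perp}$; in particular $\operatorname{codim}N$ is even, recovering a fact already used in the excerpt.

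Second, I would tie $A_x$ to the velocity $F$ and the acceleration $G=\nabla_FF$ of the orbit circles $Gy$. For $y=\exp_x(\xi)$ with $\xi\in\nu_xN$ small, expanding the flow at the zero $x$ of $F$ gives $F(y)=A_x\xi+O(|\xi|^{2})$ and $G(y)=A_x^{2}\xi+O(|\xi|^{3})$. Since $-A_x^{2}$ is symmetric positive definite on $\nu_xN$ (equal to $k^{2}\operatorname{Id}$ on the weight-$k$ subspace of the isotropy representation), the plane $\operatorname{span}(F(y),G(y))$ converges, as $y\to x$, to $\operatorname{span}(\xi,A_x\xi)$, a genuine $2$-plane because $A_x\xi\perp\xi$. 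This pins down the limiting osculating planes $\Sigma(x)$ of the preceding paragraph: when $\operatorname{codim}N=2$ one gets $\Sigma(x)=\nu_xN$ for every direction $\xi$, and in general $\Sigma(x)$ is the weight subspace through $\xi$ whenever $\xi$ lies in a single weight subspace.

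Third, I would install the complex structure. Using the spectral calculus of the symmetric positive operator $-(A_x^{\perp})^{2}$, set $J_x:=A_x^{\perp}\circ\bigl(-(A_x^{\perp})^{2}\bigr)^{-1/2}$, an orthogonal complex structure on $\nu_xN$ restricting to $(1/k)A_x^{\perp}$ on each weight-$k$ plane; intrinsically, $J_x\xi$ is the speed-normalised initial velocity of the orbit through $\exp_x(\xi)$, transported back to $x$, which is what makes the bundle canonical (it depends on nothing but the action, up to the choice of generator of $\mathbb{T}^1$, whose reversal conjugates $J$). Smoothness of $J$ as a bundle endomorphism is then clear: $F$ is smooth, hence $A$ is a smooth section of $\operatorname{End}(\nu N)$ preserving $\nu N$, and $-A^{2}$ has spectrum bounded away from $0$ on the compact $N$, so $(-A^{2})^{-1/2}$ is smooth. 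Thus $(\nu N,J)$ is a complex vector bundle $L$, a line bundle precisely in the codimension-$2$ (Grove--Searle) case with which this section is concerned.

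The construction is standard equivariant differential geometry, and the one point requiring real care — hence the main obstacle to the literal statement — is the phrase ``complex line bundle'' in higher codimension. There one must read $L$ as the splitting $\nu N=\bigoplus_j L_j$ of the normal bundle into the complex subbundles attached to the distinct weights $k_j$ of the isotropy representation, and one has to verify that these weights are locally constant along each component of $N$ — equivalently, that the eigenvalues of $-A^{2}$ do not collide — so that the $L_j$ are honest subbundles. Where eigenvalues coincide one still gets the single complex structure $J$ on all of $\nu N$, so the statement is always correct at the level of ``complex vector bundle''; only the refinement into lines can fail.
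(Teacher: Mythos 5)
Your argument is a more rigorous version of what the paper's proof only gestures at. The paper simply asserts that the osculating planes $\Sigma(x_n)$ of the shrinking orbit circles converge to a $2$-plane $\Sigma(x) \subset T_xM$ and then identifies that plane with $\mathbb{C}$; no justification of the convergence, nor of its independence from the approach direction, is offered. You replace this with a computation: introducing the skew endomorphism $A_x = (\nabla F)_x$ on $T_xM$, showing $\ker A_x = T_xN$, and reading off the velocity $F(y) \sim A_x\xi$ and acceleration $G(y) \sim A_x^2\xi$ of the orbit through $y = \exp_x(\xi)$ makes the limiting plane explicit. One small slip: that limit is $\operatorname{span}(A_x\xi, A_x^2\xi)$, not $\operatorname{span}(\xi, A_x\xi)$ in general; the two coincide only when $\xi$ lies in a single weight subspace, which is precisely when the limit is $\xi$-independent, so your conclusion is unaffected. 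The resulting complex structure $J = A^\perp\bigl(-(A^\perp)^2\bigr)^{-1/2}$ on $\nu N$ is exactly Theorem 5.3 of Kobayashi \cite{Kobayashi1972}, cited later in the appendix, and your construction buys an honest proof that each component of $N$ has even codimension, which the paper uses repeatedly.

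You are also right that, as stated, the lemma overclaims. In codimension $>2$ the limiting osculating plane depends on the approach direction $\xi$, so $x \mapsto \Sigma(x)$ does not assign a single complex line to each $x \in N$; what is canonical is the complex structure on the full normal bundle $\nu N$, with a decomposition into complex subbundles only after verifying local constancy of the isotropy weights. The lemma is literally correct only in the codimension-$2$ setting of the surrounding discussion, and your reading is the intended one.
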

\begin{proof}
The velocity field $F$ of the circle action is also called a {\bf Killing vector field}.
For a circle $Gx$ parametrized by arc length the velocity vector $\vec{r}'(t)$ is $F/|F|$.
The circles get smaller and smaller near $N$ so that their curvature gets bigger. Sufficiently
close to $N$, the acceleration $\vec{r}''(t)$ can not have zero length, so that we have a normal vector 
$G(x)$ which is perpendicular to $F(x)$. These two vectors $F(x),G(x)$ define an osculating plane $\Sigma(x)$.
If one identifies this plane with the complex plane, one can see $x \to \Sigma(x)$ as a 
complex line bundle. To summarize, a circular effective action on a positive curvature manifold
defines a complex line bundle on the fixed point set. 
\end{proof}

\paragraph{}
Given $x,y \in N$ and unit vectors $v \in T_xM \setminus T_xN$ and
$v \in T_yM \setminus T_yN$, then $\Gamma(x,v)$ and $\Gamma(y,w)$ 
and end points $x',y'$ that intersect in a collection of $I$ circles that are bounded
away from $N$. If $x=y$, and $v,w$ are in the 2-dimensional normal bundle, then $x'=x''$. 

\paragraph{}
In the co-dimension-$2$ case,
the vectors in $T_xM \setminus T_xN$ form an open ball in which the
group action produces circles of starting vectors. In a two ball, two circles intersect
if they are close enough. 

\begin{lemma}[Codimension-2 case]
If $N$ has co-dimension $2$, then $\Gamma(x,v)$ and $\Gamma(x,w)$ must agree if $v$
and $w$ are close enough.
\end{lemma}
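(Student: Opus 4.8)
The plan is to reduce the statement to the linearized picture at $x$, where the codimension hypothesis forces the isotropy to act on the normal plane by ordinary rotations, and then to show that a tube is determined by the $G$-orbit of its starting direction and that in codimension $2$ nearby normal directions share that orbit.

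First I would linearize at $x$. Since $G=\mathbb{T}^1$ fixes $N$ pointwise, the differential $dT_s|_x$ fixes $T_xN$ pointwise and preserves the orthogonal complement $\nu_x$, a real $2$-plane by hypothesis; a connected effective group of orthogonal transformations of a $2$-plane is the full rotation group, so identifying $\nu_x$ with $\mathbb{C}$ we may write $dT_s=e^{ims}$ on $\nu_x$ with $m\ge 1$ an integer. For a unit transverse vector $v=v_N+v_\nu$ ($v_N\in T_xN$, $0\ne v_\nu\in\nu_x$) the $G$-orbit $G\!\cdot\! v=\{v_N+e^{ims}v_\nu : s\in G\}$ is a round circle, and since each $T_s$ is an isometry fixing $x$ we have $T_s\gamma(x,v)=\gamma(x,dT_sv)$, so $\Gamma(x,v)=G\gamma(x,v)=\bigcup_s\gamma(x,dT_sv)$ depends on $v$ only through the set $G\!\cdot\! v$. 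Hence $\Gamma(x,v)=\Gamma(x,w)$ whenever $w\in G\!\cdot\! v$, and a short check shows that for unit transverse $v,w$ the orbits $G\!\cdot\! v$ and $G\!\cdot\! w$ coincide precisely when $v_N=w_N$ and are disjoint otherwise.

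Next I would invoke the codimension-$2$ hypothesis. The unit sphere of $\nu_x$ is the single orbit $\{e^{ims}n_0\}$, so any two unit normal vectors lie in a common orbit and by the previous step have the same tube: there is a well-defined normal tube $\Sigma_x$ at $x$, and this is the content of the assertion for purely normal $v,w$. For a general transverse $v$ I would compare $\Gamma(x,v)$ with $\Sigma_x$ through the intersection number: by the constancy lemma above the number $I$ of circular intersections of two tubes is an invariant of $M$, and a local version of it is positive because each tube punctured at its two points on $N$ is an annulus in $M\setminus N$ whose core circle links $N$, and in codimension $2$ the complement $M\setminus N$ has just the connectivity that forces two such half-annuli issuing from the same $x$ to meet. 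Then $\Gamma(x,v)\cap\Gamma(x,w)$ contains an orbit circle $G\!\cdot\! z$ with $z\notin N$; if $v,w$ are close enough this circle sits inside the injectivity ball of $x$, the geodesic arcs from $x$ to $z$ inside the two tubes are both minimizing hence equal, so $dT_{s_1}v=dT_{s_2}w$, i.e. $w\in G\!\cdot\! v$, and $\Gamma(x,v)=\Gamma(x,w)$ by the first step.

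I expect the main obstacle to be exactly the positivity and localization of that intersection circle: showing that two close tubes through $x$ are forced to meet and that the meeting can be taken inside the injectivity radius, so that uniqueness of short geodesics applies, rather than having the intersection escape toward the conjugate locus. This is where both the codimension-$2$ topology of $M\setminus N$ and the curvature input --- every transverse geodesic from $x$ refocuses on $N$ --- must be used, and both degrade in higher codimension, which is consistent with Grove--Searle rigidity being special to codimension $2$.
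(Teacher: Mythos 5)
Your opening paragraph is a clean and correct observation: since $T_s\gamma(x,v)=\gamma(x,dT_sv)$, the tube is determined by the $G$-orbit of $v$, the isotropy rotates the codimension-$2$ normal plane, and two transverse unit vectors lie in one orbit precisely when their $T_xN$-components coincide. But that observation already exhibits a ${\rm dim}(N)$-parameter family of distinct orbits, hence potentially of distinct tubes through $x$, so the lemma cannot literally mean $\Gamma(x,v)=\Gamma(x,w)$ once ${\rm dim}(N)>0$. Indeed, take $M=\mathbb{S}^4$, $N=\mathbb{S}^2$, $G$ rotating the last two coordinates, $x=e_1$: a direct computation gives $\Gamma(x,v)=\{(\cos t,\ v_1\sin t,\ v_2\sin t,\ a,\ b)\ :\ a^2+b^2=(1-v_1^2-v_2^2)\sin^2 t,\ t\in[0,\pi]\}$, and two such tubes with $(v_1,v_2)\neq(w_1,w_2)$ are distinct $2$-spheres meeting \emph{only} at $x$ and the antipode $x'$ --- there is no intersection circle in $M\setminus N$ at all. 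This is a concrete counterexample to the linking heuristic in your third paragraph (``two such half-annuli issuing from the same $x$ must meet''). Your argument also proves too much: if the claimed intersection circle $Gz$ did sit inside the injectivity ball, the minimizing-geodesic step would yield $w\in G\cdot v$, i.e.\ $v_N=w_N$, which is simply false for generic nearby $v,w$. You flagged precisely this step (``positivity and localization of that intersection circle'') as the potential obstacle; it is one, and it cannot be repaired in the form you propose.

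What survives, and what the paper actually uses in the next paragraph, is only that the \emph{endpoint} $x'$ of $\Gamma(x,v)$ is independent of $v$, so that the return map $\psi$ is well defined. The paper's proof aims at this via the trichotomy $I(x,x,v,w)\in\{0,\ n\neq 0,\ \infty\}$ for the intersection number: a shared circle forces equal tubes (killing finite nonzero $n$), and if the endpoints $y,z$ were distinct one lets $w(t)\to v$, so $z(t)\to y$, and argues that in codimension $2$ the two tubes must cross near $y$ once $z(t)$ is close enough, contradicting $I=0$. That near-$y$ crossing in the $2$-dimensional normal disk bundle is the genuine codimension-$2$ input, and it is the piece your linking step would have to be replaced by; notice that it localizes the forced intersection near the far endpoint $y\in N$, not inside the injectivity ball of $x$. (The paper's own phrase ``and so case C'' overstates the conclusion for the same reason you cannot conclude $\Gamma(x,v)=\Gamma(x,w)$, but nothing downstream needs more than equality of endpoints.)
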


\begin{proof}
The intersection number $I(x,v,w) = I(x,x,v,w)$ is constant, as long
as the end points $y,z$ of $\Gamma(x,v),\Gamma(x,w)$ are not the same. 
There are three possibilities which could depend on $v,w$ and $x$. \\
Case A: $I(x,x,v,w)=n \neq 0$, \\
Case B: $I(x,x,v,w)=0$, meaning $\Gamma(x,v), \Gamma(x,w)$ can only intersect in $N$. \\
Case C: $I(x,x,v,w)= \infty$, meaning $\Gamma(x,v) = \Gamma(x,w)$.  \\
Case $A)$ can not happen because if $\Gamma(x,v)$ and $\Gamma(x,w)$ agree in a circle, then 
they must be the same tubes. In case $B)$, we prove that $y=z$ and so case C): 
Proof: Fix $v$.  Let $y \in N$ be the end point of $\Gamma(x,v)$ and $z \in N$ the 
end point of $\Gamma(x,w)$. Chose a path $z(t)$ of end points of $\Gamma(x,w(t))$
such that $z(t) \to y$ for $t \to 0$. As we are in co-dimension $2$, we must have 
an intersection if $z(t)$ is close enough to $y$. 
\end{proof}

\paragraph{} Having established that the tubes $\Gamma(x,v),\Gamma(x,w)$ all land in the
same point $x' \in N$, we have a {\bf return map} $\psi:N \to N$. As the tubes are made
of geodesics, they all have the same length $d(x,x')$ which is the return time. 
Now the story splits on whether we are in even or odd dimensions. In even dimension, 
$N$ can be a point or two points and $\psi = \psi^{-1}$ so that $\psi$ is an involution.
In the odd dimensional case, the lowest dimensional situation can be a bouquet of circles
with $m$ circles with fundamental group $\mathbb{Z}_m$.

\begin{coro}
The map $\psi:N \to N$ with $\psi(x)=x'$ is a smooth isometry of $N$.
\end{coro}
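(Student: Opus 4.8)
The plan is to verify that $\psi\colon N\to N$, $\psi(x)=x'$, is well-defined, smooth, and distance-preserving, using only what the preceding lemmas have already established. First I would note that well-definedness is the content of the Codimension-2 Lemma: for each $x\in N$ all the tubes $\Gamma(x,v)$ with $v\in T_xM\setminus T_xN$ land at a single point $x'=\psi(x)\in N$, so $\psi$ is genuinely a map of $N$ to itself. For smoothness I would argue that $\psi$ is the composition of three smooth operations: the exponential map of $M$, which is smooth by completeness (Hopf--Rinow); the map sending $x$ to the return time $T(x,v)=d(x,x')$, which is smooth because it is the first positive time at which the geodesic meets $N$ transversally (the end point is a conjugate point, and conjugacy is a transversal, hence smoothly varying, condition in positive curvature, which is exactly what the tube lemma gives); and evaluation of the geodesic at that time. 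Concretely, pick a smooth unit section $v(x)$ of the planar bundle $L$ from the ``planar bundle'' lemma — this exists at least locally, and the value $\psi(x)$ is independent of the choice by the Codimension-2 Lemma — so $\psi(x)=\exp_x\bigl(T(x,v(x))\,v(x)\bigr)$ is a composition of smooth maps.

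Next I would establish that $\psi$ is an isometry of $N$ with its induced metric. The key is that $N$ is totally geodesic in $M$, so a geodesic of $N$ is a geodesic of $M$, and conversely the intrinsic and extrinsic distances on $N$ coincide locally along such geodesics. The natural tool is the circle action itself together with the uniqueness of the tubes: I would show that $\psi$ maps a geodesic $c(\tau)$ of $N$ emanating from $x$ to the geodesic of $N$ emanating from $\psi(x)$ obtained by ``sliding the tube,'' i.e.\ the family $\tau\mapsto\psi(c(\tau))$ traces out exactly the locus of end points of the one-parameter family of tubes $\Gamma(c(\tau),v(c(\tau)))$. Because all these tubes have the same length $T$ (the return time is constant along the family, as the tubes are made of geodesics of equal length and the intersection number argument pins them together), the end-point curve has the same arc length as $c$; differentiating, $\psi$ preserves the length of tangent vectors to $N$, hence is a local isometry, hence — being a bijection of the compact manifold $N$ onto itself — a global isometry.

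The main obstacle I expect is showing cleanly that the return time $T(x)$ is constant, or at least locally constant, on $N$, which is what makes the end-point curve isometric to the base curve rather than merely diffeomorphic. The honest route is: $T(x)=d\bigl(x,\psi(x)\bigr)$ and every point of the tube $\Gamma(x,v)$ lies on a minimizing segment from $x$ to $\psi(x)$ of that length (this was proved inside the first tube lemma), so $T$ is realized as a distance; then a first-variation / Jacobi-field argument along the tube, using that the tube closes smoothly at a conjugate point and that $F$ restricted to the tube is a Killing field vanishing at the two poles, forces the derivative of $T$ along $N$ to match the corresponding derivative of $d(\psi(x),\psi(x))\equiv 0$ suitably — equivalently, the focal structure is translation-invariant along $N$ because $N$ is the caustic of each of its points. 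Once $T$ is (locally) constant, smoothness and the isometry property both drop out of the composition formula $\psi(x)=\exp_x(T\,v(x))$ together with the fact that the flow of the Killing field $F$ commutes with $\psi$. I would close the argument by remarking that in the even-dimensional sub-case $N$ is a point or $\mathbb{S}^0$, where the statement is trivial, so the content is genuinely the higher-dimensional connected case, and there compactness of $N$ upgrades the local isometry to a global one.
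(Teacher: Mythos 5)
Your proposal takes a genuinely different route from the paper, and the route contains a real gap. The paper's proof hinges entirely on the observation that $\psi$ is an \emph{involution}: reversing a geodesic in a tube interchanges the two poles, so $\psi^{-1}=\psi$ and $\psi^2=\mathrm{id}$. This gives bijectivity for free and drives the isometry argument: $\psi$ carries geodesics of $N$ to geodesics of $N$, and a map that is simultaneously its own inverse cannot consistently expand or contract lengths, which the paper formalizes by noting that the linearization of an involution has eigenvalues $\pm 1$ on the induced action on the metric tensor, and only the eigenvalue $1$ is compatible with mapping positive-definite forms to positive-definite forms. You never invoke $\psi^2=\mathrm{id}$ at all, neither to get bijectivity (which you simply assert at the end) nor in the length argument.

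Instead you attempt to derive the isometry property from (local) constancy of the return time $T(x)=d(x,\psi(x))$ together with a ``sliding tube'' argument. This has two problems. First, constancy of $T$ along $N$ is not available at this point in the paper; it is established \emph{after} this corollary (in the ``all geodesics are periodic'' corollary) and in fact uses this corollary implicitly. Your proposed Jacobi-field route to it is not carried out, and you flag it yourself as the main obstacle. Second, and more seriously, even granting that $T$ is constant, the inference ``all tubes have the same length $T$, hence the end-point curve $\tau\mapsto\psi(c(\tau))$ has the same arc length as $c$'' is simply not valid: a one-parameter family of geodesics of fixed length emanating from one curve and landing on another does not force the two boundary curves to have equal length. (Picture a family of great-circle arcs of length $\pi/2$ on a round sphere running from a small circle near the north pole to the equator.) Something must rule out transverse spreading of the end points, and in the paper that ``something'' is exactly the involution structure, which your argument never uses. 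Your smoothness discussion via local sections of the planar bundle is reasonable and more detailed than the paper's one-line assertion, but it does not rescue the isometry step.
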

\begin{proof}
For a given $x \in N$, there is, independent of $v$, the same
end point $x' \in N$ so that the map $\psi$ is defined. As $\psi^{-1}=\psi$ we have 
$\psi^2=1$. The map $\psi$ is its own inverse. The map is also smooth. 
It is an isometry because if $\psi$ maps
geodesics into geodesics: if $a,b \in N$ are close enough, there is a unique geodesic 
$\gamma$ in $N$ connecting $a,b$. Now $\psi(\gamma)= \psi^{-1}(\gamma)$ is
a geodesic curve connecting $a'=\psi(a),b'=\psi(b)$. A linearization of $\psi$ shows
that if $\psi$ would expand length in the direction from $a$ to $b$, then $\psi^{-1}$
would shrink length. One can also look at the induced map on symmetric $2$-tensors $g$.
As the map is an involution, the eigenvalues of the linearization are $1$ or $-1$. 
But as positive definite metric tensors are mapped into itself, only the eigenvalues 
$1$ can occur.
\end{proof}

\paragraph{}
The fact that the return time $T$ is independent of $v$ allows us also to look at the 
limiting case, where the geodesics stays in $N$. The proof implies that also
within $N$, every geodesic has the same period $T$ or $2T$ in $N$.
If we look at the universal cover, then $\gamma_{nT}(x,v)=x$ for some integer 
$n=|\pi_1(M)|$. 

\begin{coro}[All geodesics are periodic]
If $M \in \mathcal{K}$ and $N=\phi(N)$ is connected, then there is a time 
$T$ which only depends on $M$ such that all geodesic orbits in $M$ have period $T$.
\end{coro}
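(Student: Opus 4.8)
The plan is to promote the tube picture built above to a statement about \emph{all} geodesics of $M$. We are in the codimension-$2$ situation of this section, so by the preceding lemmas there is a smooth involutive isometry $\psi\colon N\to N$, and for every $x\in N$ and every $v\in T_xM\setminus T_xN$ the tube $\Gamma(x,v)$ is a $2$-sphere whose meridians are geodesics running from $x$ to $\psi(x)$ in a time $T(x,v)$ that does not depend on $v$.

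First I would show that $T(x,v)$ is a constant $T$ depending only on $M$. Reversing the meridians of $\Gamma(x,v)$ realizes the same $2$-sphere as a tube based at $\psi(x)$ with endpoint $\psi^2(x)=x$, so comparing return times gives $T(\psi(x))=T(x)$. Letting $v$ tend to a tangent direction $u\in T_xN$, the tube degenerates to a geodesic of $N$ issued from $x$; since the endpoint $\psi(x)$ is independent of $v$ it does not move in the limit, so this geodesic of $N$ reaches $\psi(x)$ at time $T(x)$ and, continued inside $N$, returns to $x$ at time $T(x)+T(\psi(x))=2T(x)$. Hence every point of $N$ lies on a closed geodesic of $N$ of length $2T(x)$; running the same argument at every point of such a curve, and joining arbitrary points of the connected manifold $N$ by a broken geodesic whose corners are common endpoints, forces $x\mapsto T(x)$ to be constant. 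Call it $T$. In particular every geodesic contained in $N$ is periodic of period $2T$ (or $T$), matching the remark preceding the statement.

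Next I would treat a geodesic $\gamma$ that is transverse to $N$ at one of its points: reparametrising, $\gamma$ is a meridian of a $2$-sphere tube, so it runs from some $x\in N$ to $\psi(x)$ in time $T$, and its geodesic continuation is again transverse to $N$ at $\psi(x)$, hence a meridian of the same $2$-sphere based at $\psi(x)$, reaching $\psi^2(x)=x$ at time $2T$. To conclude that $\gamma$ closes up \emph{smoothly} at time $2T$ I would argue that the tube, being a $2$-sphere all of whose $x$--$\psi(x)$ arcs have the same length $T$ and carrying the isometric involution $\psi$ that exchanges its two poles, is a $P_T$-sphere, i.e.\ a $2$-sphere all of whose geodesics are closed of length $2T$. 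Finally, for a geodesic $\gamma$ that never meets $N$ — such geodesics do exist, e.g.\ great circles of $\mathbb{S}^{2d}$ disjoint from the fixed great subsphere — I would use that the set of directions in the unit tangent bundle whose geodesic is periodic of period $2T$ is closed, contains all meridian directions, all directions tangent to $N$, and all their images under $G$, and try to show this set is also dense by perturbing $\gamma$ into a geodesic meeting $N$ and invoking continuity of the return time together with the caustic description $N=\operatorname{Conj}(x)$ for $x\in N$ mentioned in the introduction; closed and dense then gives the claim, with universal period $T$ (or the least common multiple of the finitely many periods that occur), multiplied by $|\pi_1(M)|$ on passing to the universal cover in the space-form cases.

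The hard part will be this last step: geodesics disjoint from $N$ are not reached directly by the tube construction, so one must show either that the periodic-direction set is open as well as closed, or that it is dense, and both seem to require a genuine understanding of how geodesics focus on $N$ — precisely the caustic statement. A secondary gap to close is the $P_T$-sphere claim for the individual tubes, which is where the hypothesis that $N$ is connected (so that the endpoint map is the single global involution $\psi$, and $T$ is a single number) is really used.
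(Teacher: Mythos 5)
Your first two steps (constancy of the return time, and geodesics inside $N$ via continuity from transverse tubes) are roughly what the paper does; if anything you argue the constancy of $x\mapsto T(x)$ more carefully than the paper, which essentially treats it as already established. The paper's proof of the $N$-tangent case is exactly your limit $v_k\to v\in T_xN$.

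The genuine divergence is in your third step, and it is where you missed the paper's key move. You treat geodesics disjoint from $N$ as an obstruction and propose a closed-plus-dense argument on the unit tangent bundle, which you cannot complete. The paper instead applies the tube construction to an \emph{arbitrary} geodesic, not just to geodesics emanating from $N$: given $z\in M\setminus N$ and $w\in T_zM$, it forms $\Gamma(z,w)=\bigcup_{s\in G}T_s\,\gamma(z,w)$ and asserts this $G$-orbit surface has two poles $x,x'\in N$. The meridians of that tube are exactly the geodesics $T_s\gamma(z,w)$, so $\gamma(z,w)$ itself is (up to reparametrisation) some $\gamma(x,v)$ with $x\in N$, hence periodic of period $T$ (or $2T$) by the case already settled. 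In other words, the paper's route is to show that \emph{every} geodesic actually meets $N$, eliminating the ``disjoint from $N$'' class altogether rather than approximating it. That is the idea you were missing.

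Two cautionary remarks on the paper's own version of this step, which your worry about $S^{2d}$ actually highlights. First, the claim that $\Gamma(z,w)$ always meets $N$ is not justified in the text: the first lemma only proves focusing for tubes \emph{based at} a point of $N$, and there are geodesics (e.g.\ $G$-orbits that happen to be geodesics, such as the equatorial great circle in the rotation plane on a round $S^{2d}$) whose $G$-orbit is a degenerate one-dimensional tube that never touches $N$, so the step needs a separate argument or at least an explicit exclusion of that exceptional family. Second, the paper writes $\gamma_T(x,v_k)=x$, but the return time $T$ sends $x$ to $\psi(x)$, so the honest conclusion is $\gamma_T(x,v_k)=\psi(x)$ and hence period $2T$ when $\psi\neq\mathrm{id}$; this is the same $T$-versus-$2T$ bookkeeping you flag. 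So your instinct that something is delicate about geodesics disjoint from $N$ is correct; you just did not find the tube-of-an-arbitrary-geodesic reduction that the paper uses to (attempt to) dispose of it.
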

\begin{proof}
We use the continuity of the flow. Given $x \in N$ and $v \in T_xN$, 
we get a geodesic $\gamma_t(x,v)$ in $N$. Let $v_k \to v$ be a sequence of 
unit vectors in $T_xM \setminus T_xN$. This produces a sequence of paths 
$\gamma_t(x,v_k)$ in $M \setminus N$ which all have period $T$ meaning
$\gamma_T(x,v_k)=x$. By continuity, we also have $\gamma_T(x,v)=x$.
This fixes all geodesics which start in $N$. Let $z$ now be a point in $M \setminus N$
and $w$ a unit vector in $T_zM$. This geodesic comes in a family
$\Gamma(z,w) = T_t \gamma(z,w)$ for which two points $x,x'$ are in $N$, hitting there
at unit vectors $v,v'$. Because $\gamma(z,w) = \gamma(x,v)$ and $\gamma(x,v)$ has period
$T$ also $\gamma(z,w)$ has period $T$.
\end{proof}

\paragraph{}
{\rm Remark.} There is a  $2$-sphere with a rotationally symmetric metric, where the round sphere or
Zoll sphere are all geodesics periodic \cite{GromollGrove1981, Zoll1903}.

\begin{coro}
For $0<t<T$, the geodesic ball $B_t(x) = \exp_t(B_1(x))$ is a ball in $M$.
\end{coro}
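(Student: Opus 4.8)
The plan is to reduce the statement to the standard fact that a geodesic ball of radius smaller than the injectivity radius is diffeomorphic to a round ball, and then to show that in the present setting the injectivity radius at every point is at least $T$. Concretely, I would prove that for $0<t<T$ the map $v\mapsto\exp_x(tv)$ is a diffeomorphism of the open unit ball $B_1(x)\subset T_xM$ onto its image $B_t(x)$. This requires two things: that $\exp_x$ has no critical point on the open $t$-ball (no conjugate point of $x$ at distance $<T$), and that $\exp_x$ is injective there (no cut point of $x$ at distance $<T$). Granting both, $\exp_x$ is an injective immersion of a ball, hence an embedding onto an open submanifold of $M$ diffeomorphic to a ball; a first-variation argument then also identifies that submanifold with the metric ball of radius $t$, so it really is ``a ball in $M$''.

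The absence of conjugate points before time $T$ is the part that genuinely uses the structure established above, and not merely positivity of the curvature, which by itself bounds the conjugate distance only from above. For $x\in N$ and $v$ normal to $N$, the sphere-tube lemma says $\Gamma(x,v)$ is a $2$-sphere on which the geodesics issuing from $x$ remain minimizing until they refocus together at $\psi(x)\in N$ at time $T$; hence along $\gamma(x,v)$ the first zero after $0$ of a Jacobi field tangent to the tube occurs exactly at $T$. A Jacobi field transverse to the tube cannot vanish earlier either, since that would make nearby geodesics of $M$ meet $\gamma(x,v)$, or reach $N$, before time $T$, against the positivity of the focal radius of points off $N$ used in that lemma. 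Directions $v$ tangent to $N$, and base points $x\notin N$, follow by the continuity scheme of the preceding corollary: approximate $v$ by normal directions, respectively slide $x$ along a tube, and use lower semicontinuity of the first conjugate time. This gives conjugate radius at least $T$ everywhere.

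For injectivity, suppose two distinct radial segments from $x$ of length $<t<T$ ended at a common point $y$. Using the conjugate-radius bound we may take both to be minimizing and of equal length, so $y$ is a cut point of $x$ reached by two distinct minimizing geodesics $\sigma_1,\sigma_2$ of length $<T$. Concatenating $\sigma_1$ with the reverse of $\sigma_2$ produces a geodesic loop at $x$ of length $<2T$, which by Klingenberg's lemma is promoted to a smooth closed geodesic; since by the preceding corollary every closed geodesic has length $T$, this already forces the cut radius to be at least $T/2$, and to reach the full $T$ one invokes the tube picture once more: here $\sigma_1$ and $\sigma_2$ lie in a common $2$-sphere tube through $x$, and such a tube cannot contain two distinct geodesics through its pair of poles $x,\psi(x)$ that also meet at an interior point $y$, so no such $\sigma_1\neq\sigma_2$ of length $<T$ exists. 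The main obstacle is precisely this sharpening — getting the radius up to $T$ rather than $T/2$ — which is exactly the place where the fine geometry of the tubes, rather than just positive curvature or the common period of closed geodesics, is indispensable.
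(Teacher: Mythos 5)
Your proposal is noticeably more careful than the paper's own one-sentence proof, and it identifies a real gap: the paper asserts that ``$S_r(x)$ has not yet reached a conjugate point'' is \emph{equivalent} to $B_r(x)$ being a topological ball, but this is false in general — absence of conjugate points controls the rank of $\exp_x$, while one also needs absence of \emph{cut} points to get injectivity. (Round $\mathbb{RP}^n$ is the standard counterexample: for $\pi/2<r<\pi$ there are no conjugate points in $B_r$, yet $B_r=\mathbb{RP}^n$ is not a ball.) Your split into a conjugate-radius bound and a cut-radius bound, and the use of Klingenberg's lemma together with the common-period corollary for the latter, is exactly the missing half of the argument, and the further sharpening via the tube picture (two minimizing segments from $x\in N$ to a cut point $y$ would force two tubes from $x$ through the circle $Gy$, which the codimension-$2$ lemma forbids unless they coincide, and then $y$ would be $G$-fixed, hence in $N$, contradicting $d(x,y)<T$) is the right idea. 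So you are doing strictly more than the paper.

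Two caveats. First, the paper is internally inconsistent about what $T$ denotes: the previous corollary's $T$ is the common \emph{period}, while this corollary (and the one after it, which uses $r>T(x,x')/2$) clearly needs $T$ to be the \emph{return time} $d(x,\psi(x))$; in the sphere case the period is twice the return time, and in the $\mathbb{RP}$ (period-$1$) case the statement as written is actually false — though it is also never needed there. Your write-up inherits this ambiguity: the Klingenberg step ``closed geodesics have length $T$, so cut radius $\ge T/2$'' uses $T=$ period and already gives cut radius $\ge$ return time in the period-$2$ case, which is all that is required; the additional ``sharpening to $T$'' is then aiming at a bound that is both unnecessary and, with $T=$ period, untrue. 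Second, the transverse-Jacobi-field step and the reduction of the base point $x\notin N$ are asserted by appeal to continuity rather than proved; they are plausible but would need to be spelled out, in particular because the tubes and the return map $\psi$ are only defined at points of $N$. With $T$ consistently read as the return time and these two steps tightened, your argument is a sound and genuinely more complete version of what the paper gestures at.
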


\begin{proof}
This is because the wave front $S_r(x)$ has not yet reached a
conjugate point. This is equivalent to the fact that $S_r(x)$ is
a sphere and $B_r(x)$ is a topological ball.
\end{proof}

\begin{coro}
If $\psi$ has period 2, then $M$ is a sphere. \\
If $\psi$ has period 1, then $M$ is a real projective space.
If $\psi$ has period $n>2$, then the universal cover of $N$ is a sphere and $N$ is a space form. 
\end{coro}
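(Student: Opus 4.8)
The last corollary packages the three cases of the return map $\psi$ into a topological classification of $M$. The strategy is to build $M$ from two pieces: a tubular neighborhood of $N$ (a disk bundle), and the complementary region swept out by the geodesic tubes. First I would use the previous corollary: for $0<t<T$ the geodesic flow gives a genuine topological ball $B_t(x)$ around every point, so no conjugate points appear before time $T$, and the whole manifold is swept out by the sphere tubes $\Gamma(x,v)$ emanating from $N$, each of which reaches $N$ again at $\psi(x)$ in time exactly $T=d(x,\psi(x))$. Thus $M \setminus N$ fibers over $N$ via the cylinders $Z(x,v)$, and attaching the caps at $x$ and $\psi(x)$ identifies $M$ as the union of two $2$-disk bundles over $N$ (the normal bundle near $x$ and near $\psi(x)$) glued along their boundary sphere bundle — precisely the classical "join"/double construction underlying Grove--Searle.

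\paragraph{}
The second step is to feed in the algebraic type of $\psi$. If $\psi$ has period $2$ but is nontrivial, then $x$ and $\psi(x)$ are distinct points of $N$ for generic $x$; the tube $\Gamma(x,v)$ is a geodesic $2$-sphere whose two poles lie in $N$, and as $v$ ranges over the normal circle and $x$ over $N$, these spheres foliate $M$. Because every geodesic from $N$ has the same period $T$ and no conjugate point before $T$, $M$ is obtained from the suspension of the normal sphere bundle: when $N$ has codimension $2$ this is a sphere $\mathbb{S}^{2d}$. If $\psi$ has period $1$, i.e. $\psi=\mathrm{id}$, then $x'=x$: every geodesic leaving $N$ returns to its starting point, so $M$ is a quotient of the sphere case by the free involution that swaps the two normal hemispheres, giving $\mathbb{RP}^{2d}$. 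If $\psi$ has period $n>2$, then $\langle \psi \rangle \cong \mathbb{Z}_n$ acts on $N$, the orbit space construction shows $\pi_1(M)=\mathbb{Z}_n$, and passing to the universal cover $\widetilde M$ lands us back in the period-$1$ (hence sphere) case for $\widetilde N$; so $\widetilde N$ is a sphere and $N=\widetilde N/\mathbb{Z}_n$ is a spherical space form, with $M=\mathbb{S}^{2d+1}/\mathbb{Z}_n$ in the odd-dimensional incarnation.

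\paragraph{}
The main obstacle is making the gluing rigorous: one must verify that the two disk bundles glue along the boundary by a bundle isomorphism compatible with the circle action, i.e. that the clutching function is the identity up to the action of $\psi$ — this is where the constancy of the return time $T$ and the absence of conjugate points before $T$ (the previous corollary) do the real work, ruling out any twisting that would produce an exotic bundle. A secondary subtlety is the low-dimensional degeneration: when $N$ is a point or $\mathbb{S}^0$ the "bundle" is trivial and $M$ is literally a sphere obtained by doubling a disk, and when $2d=0$ the object is an orbifold circle bouquet $\mathbb{S}^1/\mathbb{Z}_m$; I would dispatch these as base cases before running the bundle argument in positive dimension. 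Once the bundle identification is in hand, recognizing the total space as $\mathbb{S}^{2d}$, $\mathbb{RP}^{2d}$, or $\mathbb{S}^{2d+1}/\mathbb{Z}_n$ is standard, and the conclusion that $N$ is a space form follows because $\psi$ is an isometry of $N$ (the earlier corollary) of finite order $n$ acting freely on the universal cover.
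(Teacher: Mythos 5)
Your route is genuinely different from the paper's, and it helps to see why. The paper's proof of the period-$2$ case is essentially a one-liner: pick $T/2 < r < T$ and observe that the two geodesic balls $B_r(x)$ and $B_r(\psi(x))$ --- genuine topological balls by the preceding corollary --- already cover $M$, and a closed manifold covered by two open balls is a sphere (Brown's ``twisted double'' criterion). Period $1$ then reduces to period $2$ by passing to the universal cover, as you also do. You instead mount the Grove--Searle-style double-disk-bundle decomposition, which is a legitimate and more structural alternative, but your description of the two pieces is off: in the codimension-$2$, period-$2$, connected case, $M$ is \emph{not} the union of two $2$-disk bundles over $N$. One piece is the tubular $2$-disk bundle over $N$, but the complementary piece is a disk bundle of rank $2d-1$ over the exceptional orbit $G y$ (a circle) through a point $y$ at maximal distance from $N$; for $M=\mathbb{S}^{2d}$ and $N=\mathbb{S}^{2d-2}$ this is the handle decomposition $\mathbb{S}^{2d}=(D^2\times\mathbb{S}^{2d-2})\cup(\mathbb{S}^1\times D^{2d-1})$ glued along $\mathbb{S}^1\times\mathbb{S}^{2d-2}$. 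To make your decomposition argument run, you would have to identify that second piece as a thickened circle and then control the clutching map over the common boundary --- exactly the bundle bookkeeping you flag as the ``main obstacle'' --- whereas the paper's two-ball criterion avoids this entirely. On the positive side, your universal-cover treatment of period $1$ and of period $n>2$ (the odd-dimensional spherical space forms) is correct and actually supplies the period-$n$ branch that the paper's written proof leaves implicit.
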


\begin{proof}
If $\psi$ has period $2$, the manifold $M$ is covered by two balls
$B_r(x), B_r(x')$ with $r>T(x,x')/2$. It must therefore be a topological
ball. If $\psi$ has period $1$, it can not be simply connected. Look
at the universal cover. There, the period is $2$ and the previous situation
applies. Now $M = S^{2d}/\mathbb{Z}_2 = \mathbb{RP}^{2d}$. 
\end{proof} 

\section{Caustic}

\paragraph{}
The set $C(x)$ of all points $y$ which are conjugate to $x$
in $M$ is also called the {\bf caustic} of $x$. This is in general different from the {\bf cut locus}
of $x$ but in the current situation with $x \in N$ connecting to $x' \in N$, the point $x'$ is both in
the caustic as well as in the cut locus. What we need for a caustic is a family of geodesics starting with $x$
ending up at $y$. If the Killing vector field $F$ defined
by the circle action is restricted to the geodesic it gives a Jacobi field $J(t)$
solving the Jacobi equation which is zero on $N$.

\paragraph{}
Caustics $C(x)$ can be complicated even for $2$-manifolds and even non-differentiable at many
places for differentiable $M$ \cite{elemente98}. Here with a group of isometries acting on $M$ and
connected fixed point set $N$ we are in a special situation, where a smooth geodesic sub-manifold $N$
of the manifold $M$ is the caustic $C(x)$ of each of its points.

\begin{coro}
If $M \in \mathcal{K}$ with connected or almost connected fixed point set $N$, 
then the manifold $N$ is the caustic for every $x \in N$.
\end{coro}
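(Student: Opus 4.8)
The plan is to establish the two inclusions $N\subseteq C(x)$ and $C(x)\subseteq N$ separately, using throughout that the Killing field $F$ of the $G=\mathbb{T}^1$ action, restricted to any geodesic $\gamma$, is a Jacobi field that vanishes exactly where $\gamma$ meets $N$.

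For $C(x)\subseteq N$, I would first invoke the corollary that for $0<t<T$ the geodesic ball $B_t(x)$ is a topological ball: this says $\exp_x$ has no critical point at distance less than $T$, so $x$ has no conjugate point before time $T$ along any geodesic from $x$. Next, for a unit vector $v\notin T_xN$, the tube lemma together with the ``all geodesics are periodic'' corollary forces $\gamma(x,v)$ to meet $N$ for the first time exactly at $t=T$, at the focal point $x'$ of the tube $\Gamma(x,v)$, and then again at $t=2T,3T,\dots$ by periodicity. Iterating the ball estimate with base point the intermediate return point in $N$, and using that $J=F|_\gamma$ vanishes precisely at these times, I would conclude that every conjugate point of $x$ along $\gamma(x,v)$ occurs at some $t\in T\,\mathbb{Z}_{>0}$ and hence lies in $N$. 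If instead $v\in T_xN$, then $\gamma(x,v)$ stays in the totally geodesic submanifold $N$, so its conjugate points lie in $N$ automatically.

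For $N\subseteq C(x)$, fix $y\in N$. If $y$ can be joined to $x$ by a geodesic $\gamma$ leaving $N$, then $J=F|_\gamma$ is a nonzero Jacobi field vanishing at $x$ and at $y$, so $y$ is conjugate to $x$ and we are done; the case $y=x$ is already covered by the periodic geodesic leaving $N$, along which $J$ vanishes at $0$ and $T$. That such a $\gamma$ exists for every $y$ follows, I expect, by showing that the union over $v\notin T_xN$ of the focal points $x'$ of the tubes $\Gamma(x,v)$, together with the time-$T$ wavefronts of the $N$-geodesics from $x$, exhausts $N$: here one uses connectedness of $N$, the planar-bundle lemma describing how tubes attach along $N$, and that within $N$ every geodesic is periodic of period $T$ or $2T$. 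For directions $w\in T_xN$ one passes to the limit: taking $w_k\to w$ with $w_k\notin T_xN$, the Jacobi fields $F|_{\gamma(x,w_k)}$ renormalized by their transverse derivative at $x$ converge to a nonzero normal Jacobi field along $\gamma(x,w)$ vanishing at $x$ and at the limiting return time, whose endpoint lies in $N$. In the almost connected case $N=N_1\cup\{p\}$ the same reasoning applies: as in the remark after the intersection-number lemma every tube is bounded away from $p$, while $p$ is the focal point of exactly those tubes that pinch there, so $p\in C(x)$ as well.

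The step I expect to be the main obstacle is the genuine content of the second inclusion: proving that the focal points of the tubes through $x$, together with the $N$-geodesic wavefronts, really sweep out \emph{all} of $N$ rather than a proper closed subset. This is precisely where the rigidity of the configuration---connectedness of $N$, the complex line bundle $L$ on $N$, and the common period $T$---must be used. A parallel, smaller difficulty in the first inclusion is pushing the ``no conjugate point before time $T$'' estimate past $t=T$ onto the whole half-line; I would handle this by iterating the ball corollary along the broken sequence of return points of $\gamma$ in $N$.
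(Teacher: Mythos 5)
The paper, like you, argues both inclusions, but the mechanisms differ. For $C(x)\subseteq N$ the paper does not use the ball corollary at all: it argues directly that a point $y\notin N$ cannot be conjugate to $x$, because a one-parameter family of geodesics joining $x$ to $y$ would produce two distinct tubes $\Gamma(x,v)$, $\Gamma(x,w)$ both containing $x$ and the circle $Gy$, contradicting the earlier tube lemmas. Your route via the ball corollary and periodicity has a genuine weak point that the paper's route avoids: once you pass $t=T$ and iterate the ball corollary with base point the intermediate return point $x_1\in N$, you only control conjugate points \emph{of $x_1$}, not conjugate points of the original $x$ along the continued geodesic. Absence of $x_1$-conjugate points on $(T,2T)$ does not give absence of $x$-conjugate points there; closing that gap needs an index-theoretic argument or the tube picture, not mere iteration of the ball estimate.

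For $N\subseteq C(x)$ the paper disposes of it in one sentence: ``because $\psi$ is an involution it is invertible and every point in $N$ is in the caustic of $x$.'' You are right to be suspicious here, and your honest identification of the surjectivity problem --- that the focal points of the tubes through $x$, together with the wavefronts of the $N$-geodesics, must be shown to exhaust $N$ --- is the sharpest observation in your write-up. The paper's appeal to the involution $\psi$ does not obviously settle it: invertibility of $\psi$ says only that $x'=\psi(x)$ runs over all of $N$ as $x$ does, not that every $y\in N$ is conjugate to the \emph{fixed} point $x$. So the step you flag as the main obstacle is indeed the substantive content of the statement, and the paper's own proof is terse enough that it does not visibly close it. In short: your $C(x)\subseteq N$ argument is weaker than the paper's and needs repair near $t=T$, while your treatment of $N\subseteq C(x)$ is more candid than the paper's and correctly locates where the real work lies.
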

\begin{proof}
We see that $x'=\psi(x)$ is part of the caustic $C(x)$ because it is a conjugate point
as there is a one-parameter family of geodesics connecting $x$ with $x'$.
Because $\psi$ is an involution it is invertible and every point in $N$ is in the caustic of $x$.
As $\phi(N)$ has only one component, $N$ is the caustic of each of its
points $x \in N$. If $y$ is a point outside $N$, then it can not be in the caustic $C(x)$ as
if there were two geodesics connecting $x$ with $y$, then we had two tubes
$\Gamma(x,y),\Gamma(x,y)$ which both contain $x$ and the circle $Gy$.
\end{proof}

\paragraph{}
{\bf Remark.} In general, caustics are non-smooth and have cusps. In our case, the caustic $N$ is a smooth manifold.
That caustics are a mystery even in simplest situations is illustrated by the unsolved
Jacobi's last statement which claims that the caustics $C(x)$ of
a $x$ on a $2$-ellipsoid has exactly four cusps except if $x$ is an umbillical point.
The fact that the return time $T(x,v)$ is independent of $v$ can be seen geometrically:
The union $\Gamma(x) = \bigcup_{v \in T_xM \setminus T_xN} \Gamma(x,v)$ of all tubes
and the {\bf wave front} $S_r(x) = \{ y \in M \setminus N \; | d(x,y)=r \}$
which is the {\bf geodesic sphere} of radius $r$. They reach the point $x'$ at the same time.

\section{The almost connected case}

\paragraph{}
We only sketch the almost connected case. It just means that the geodesics pass
through an additional focal point before returning. That changes the length of the
minimal geodesics and is the reason for different pinching curvature constants. 
Let $N$ be the connected component and $\{ p \}$ the additional point.
There are now two possibilities. Either $\phi(M) = \{p,q\}$ consists of
two points or then, $\phi(M) = N \cup \{p\}$ where $N$ has positive dimension. 
What happens now is that we have the same situation as before just that the
sphere tube $\Gamma(x,v)$ starting at $N$ will focus on $p$ before returning
to $N$. We can then look at the map $\psi:N \to N$ as before. The only thing
we have to worry about is that only part of the geodesic could reach first $p$ while
other parts directly come back to $N$. 

\paragraph{}
{\bf Definition}. Given $x \in N, v \in T_xM \setminus T_xN$, let $J(x,v)$
denote the number of visits of $p$ before returning to $N$. 

\paragraph{}
Given $x \in N$, there are geodesics from $p$ to any point $x$ and so a 
sphere tube $\Gamma(x,p)$ 
The same analysis before shows that for all $x=p$ and all $v \in T_pM$, the
tubes $\Gamma(p,v), \Gamma(p,w)$ do not intersect if $v \neq w$. 

\begin{lemma}
For all $x \in N$, the geodesic sphere tube reaches first $p$, then returns 
back to $N$. 
\end{lemma}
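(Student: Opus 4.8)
The plan is to prove the sharper statement that $p$ lies on \emph{every} sphere tube $\Gamma(x,v)$ with $x\in N$ and $v\in T_xM\setminus T_xN$. Granting this, $\Gamma(x,v)$ is a $2$-sphere on which $G=\mathbb{T}^1$ acts effectively (the action is nontrivial since $v\notin T_xN$ forces the orbit $G\gamma(x,v)$ to be genuinely $2$-dimensional), so it has exactly two fixed points; one is $x$ and the other must be $p$. Hence $p$ is the return point $x'$ in the definition of the tube, i.e.\ $\gamma(x,v)$ focuses first at $p$; its continuation beyond $p$ is a geodesic issuing from $p$, which by the same reasoning returns to the fixed set at a point that is not $p$ and hence lies in $N$. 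The ``partial return'' worry evaporates because all geodesic segments constituting $\Gamma(x,v)$ are images of one another under $G$, and $G$ preserves $N$ and fixes $p$, so they all hit $p$ first and return to $N$ at the same parameter.

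To see that $p$ is on some tube through $x$, I would first observe that the tubes issuing from $p$ cover $M$: for $y\ne p$ take a minimizing geodesic $\sigma$ from $p$ to $y$ (Hopf--Rinow), and note it cannot run past a conjugate point of $p$ before reaching $y$, so $\sigma$ meets $y$ before its first return to the fixed set; thus $y\in\Gamma(p,\dot\sigma(0))$. Applying this to $y=x\in N$, the first return of $\Gamma(p,\dot\sigma(0))$ to the fixed set must be exactly $x$: it is not $p$ (near $p$ the tube is an embedded disk with $p$ as its single pole, so $p$ is not revisited), and it is not another point of $N$ (else the minimizing $\sigma$ would pass a conjugate point). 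Therefore, as a subset of $M$, $\Gamma(p,\dot\sigma(0))$ equals $\Gamma(x,v_0)$ for the direction $v_0$ at $x$ pointing back along $\sigma$, and this tube contains $p$.

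Next I would spread this from $v_0$ to all admissible $v$. The set of unit vectors in $T_xM\setminus T_xN$ is $S^{2d-1}\setminus S^{2d-3}$, which is connected since $N$ has codimension $2$. By the codimension-$2$ lemma, $v\mapsto\Gamma(x,v)$ is locally constant on this set, so the property ``$p\in\Gamma(x,v)$'' is locally constant, hence constant; since it holds at $v_0$ it holds everywhere. Combined with the first paragraph this proves the lemma.

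The main obstacle is the behaviour of $\gamma(x,v)$ when $v$ has a large $T_xN$-component: then $\gamma(x,v)$ spirals along $N$ and its first return to the fixed set could occur far away, so one must really trust that the codimension-$2$ gluing identifies all these tubes with the single tube through $p$ --- equivalently, that up to $G$ there is only one tube through $x$ --- and that no tube degenerates (poles colliding, the $2$-sphere pinching) as $v$ ranges over the connected set above. A secondary point to nail down is that a geodesic leaving $p$ cannot return to $p$ as its first hit of the fixed set, for which I would use that $\exp_p$ is a diffeomorphism on a small ball and that tubes do not self-intersect.
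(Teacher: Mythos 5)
The paper gives no proof of this lemma at all --- it is stated without argument, so there is no ``paper proof'' to compare against; what follows is a critique of the proposal on its own terms.

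Your paragraphs 1 and 2 are sound: if $p$ lies on a sphere tube $\Gamma(x,v)$, it must be one of the two $G$-fixed poles, hence $p=x'$; and the minimizing-geodesic argument does produce a particular tube through $x$ containing $p$, namely the one whose initial direction $v_0$ at $x$ is (up to the $G$-rotation) normal to $N$. The gap is the spreading step in paragraph 3. You invoke the Codimension-2 lemma to assert that $v\mapsto\Gamma(x,v)$ is locally constant, so that ``$p\in\Gamma(x,v)$'' propagates from $v_0$ across the connected set $S^{2d-1}\setminus S^{2d-3}$. But that reading of the lemma cannot be right: the $G$-orbits in the unit sphere of $T_xM$ away from $T_xN$ form a $(2d-2)$-parameter family, so the tubes $\Gamma(x,v)$ genuinely vary with $v$ and cannot all coincide; what the lemma's proof (and the sentence right after it, ``Having established that the tubes $\Gamma(x,v),\Gamma(x,w)$ all land in the same point $x'\in N$\dots'') actually delivers is only that the \emph{endpoints} agree. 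And indeed the spreading conclusion is false in the model case: take $M=\mathbb{CP}^d$, $N=\mathbb{CP}^{d-1}=\{z_d=0\}$, $p=[0:\dots:0:1]$, $x=[1:0:\dots:0]$, and a mixed initial direction lifting to $w=(0,a,0,\dots,0,b)$ with $a,b$ real and nonzero, $a^2+b^2=1$. The geodesic is $\gamma(t)=[\cos t:a\sin t:0:\dots:0:b\sin t]$; on $(0,\pi)$ it meets no fixed point (the last coordinate is nonzero, so not in $N$; the second coordinate is nonzero, so not $p$), and at $t=\pi$ it returns to $x$. Thus $p\notin\Gamma(x,v)$ for this $v$, and the very obstacle you flag in your final paragraph (``when $v$ has a large $T_xN$-component\dots one must really trust that the codimension-2 gluing identifies all these tubes with the single tube through $p$'') is not a technicality to be smoothed over but a real failure: there is no such identification. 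The statement as you (and the lemma) phrase it --- every tube from $x$ hits $p$ first --- is false; only the tube launched normal to $N$ has that property, which is what paragraph 2 proves, and the rest of the argument should be reorganized around that restricted claim rather than trying to globalize it over all admissible $v$.
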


\paragraph{}
We have now a complete picture of the geodesic flow. If $N=\{q\}$ and
$\phi(N)=\{p,q\}$ then the two charts $B_{3T/4}(p)$ and $B_{3T/4}(q)$
are balls which cover the entire manifold $M$ so that $M$ must be a sphere.
In the other case we proceed by induction. Assume $N=\mathbb{CP}^{d-1}$  with 
$d \geq 0$, then $N$ is covered by $d$ charts and $M$ is covered by $d+1$
charts, obtained by attaching a $2d$ dimensional ball $B_{3T/4}(p)$ 
to the charts of $N$ expanded by $B_{3T/4}$. This atlas covers $M$ 
and $M$ is topologically $\mathbb{CP}^d$.

\paragraph{}
The positive curvature manifolds
$\mathbb{RP}^{2d},\mathbb{S}^{2d}, \mathbb{CP}^{d}$ and $\mathbb{HP}^d$ all have connected or almost
connected fixed point components. As also the Wallach manifold 
$W^{12}$ can have a single fixed point component $W^6$ under a circular symmetry, a natural question is
whether all remaining positive curvature manifolds $\mathbb{W}^6,\mathbb{E}^6,W^{12},\mathbb{OP}^2,
\mathbb{W}^{24}$ can be constructed from earlier ones by steps using extensions $N \to M$                      
using $U(1),SU(2)$ as symmetries, the cases $2=\mathbb{S}^0 \to \mathbb{CP}^1=\mathbb{S}^2 
\to \mathbb{HP}^1=\mathbb{S}^4$ $\to \mathbb{OP}^1 = \mathbb{S}^8 \to_{+1}$ 
$\mathbb{OP}^2 \to_{+\mathbb{CP^2}}$ $\mathbb{HP}^5 \to \mathcal{W}^{24}$ getting from zero $0$ to $24$ 
dimensions $\mathcal{W}^{24}$ and whether the $SU(2)$ extension
$N=\mathbb{S}^2 + \mathbb{S}^2 + \mathbb{S}^0$ to $\mathbb{W}^6$ or $\mathbb{E}^6$ are the only
cases, where the fixed point set can have more than 2 components. It is a special transition as the Euler
characteristic, there does not either remain or increase by $1$ but gets to $6$. That
$W^6$ or $E^6$ can not be obtained from two copies of $\mathbb{CP}^2$ follows from Frankel's theorem
as they must then be connected. Frankel's theorem disallows extensions increasing the dimension by $4$
when going from $N$ to $M$ if one or more components have dimension $4$. So, 
the extension from dimension $2$ to dimension $6$ was the only one.

\section{Illustrations}

\begin{figure}[!htpb]
\scalebox{0.4}{\includegraphics{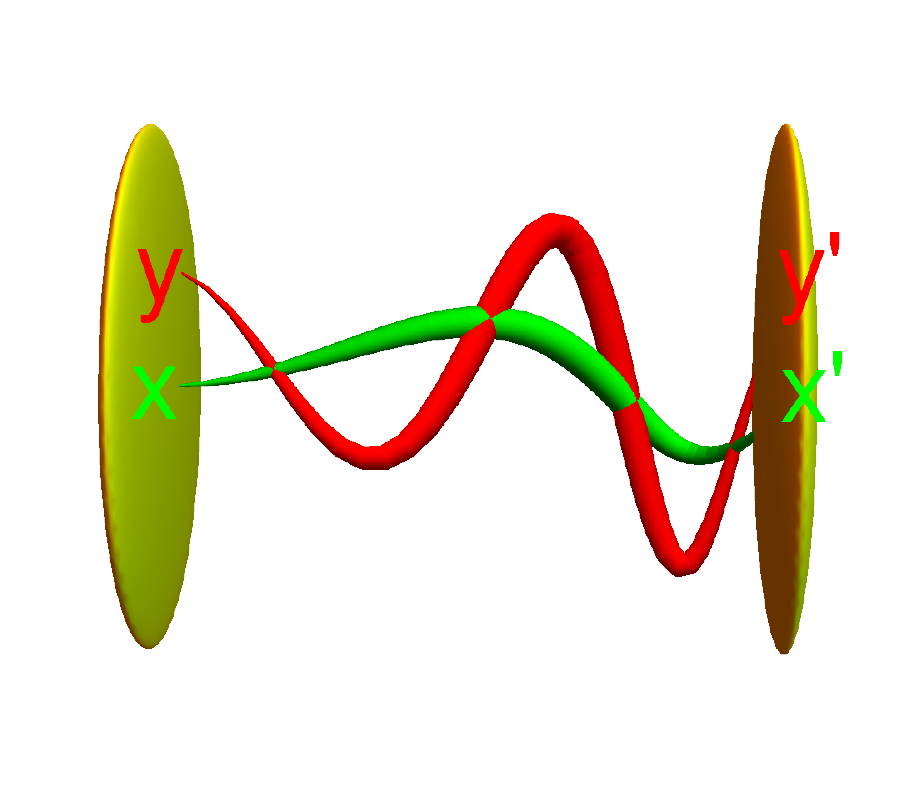}}
\scalebox{0.4}{\includegraphics{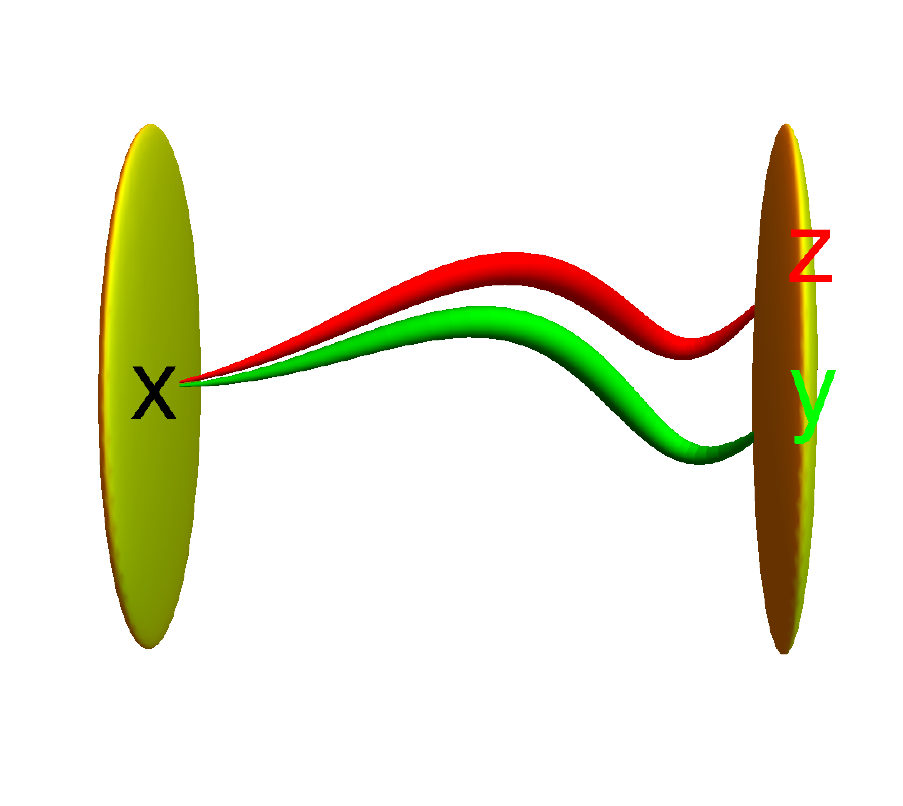}}
\scalebox{0.4}{\includegraphics{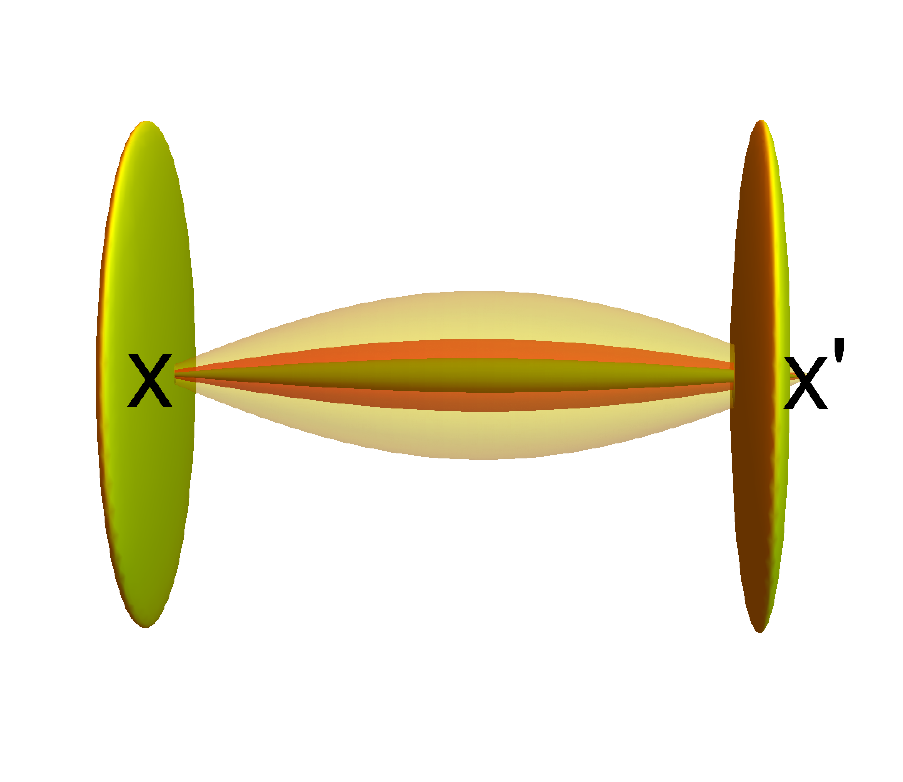}}
\caption{
Two spherical tubes $\Gamma(x,v), \Gamma(y,w)$ starting at $x \neq y$ and 
ending at $x' \neq y'$ intersect in a constant number $I(x,y,v,w)$ of circles. 
If $x=y$, then $x'=y'$. The tubes then either agree or do not intersect.
}
\end{figure}

\begin{figure}[!htpb]
\scalebox{0.45}{\includegraphics{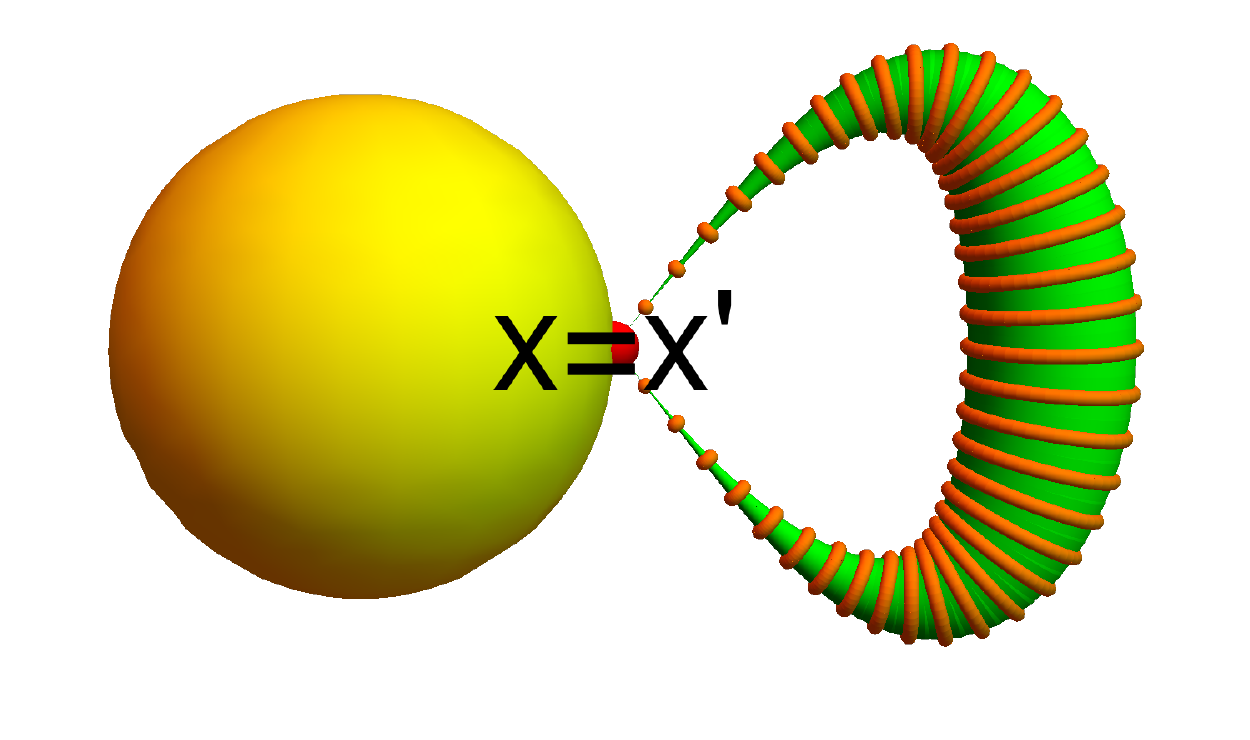}}
\scalebox{0.32}{\includegraphics{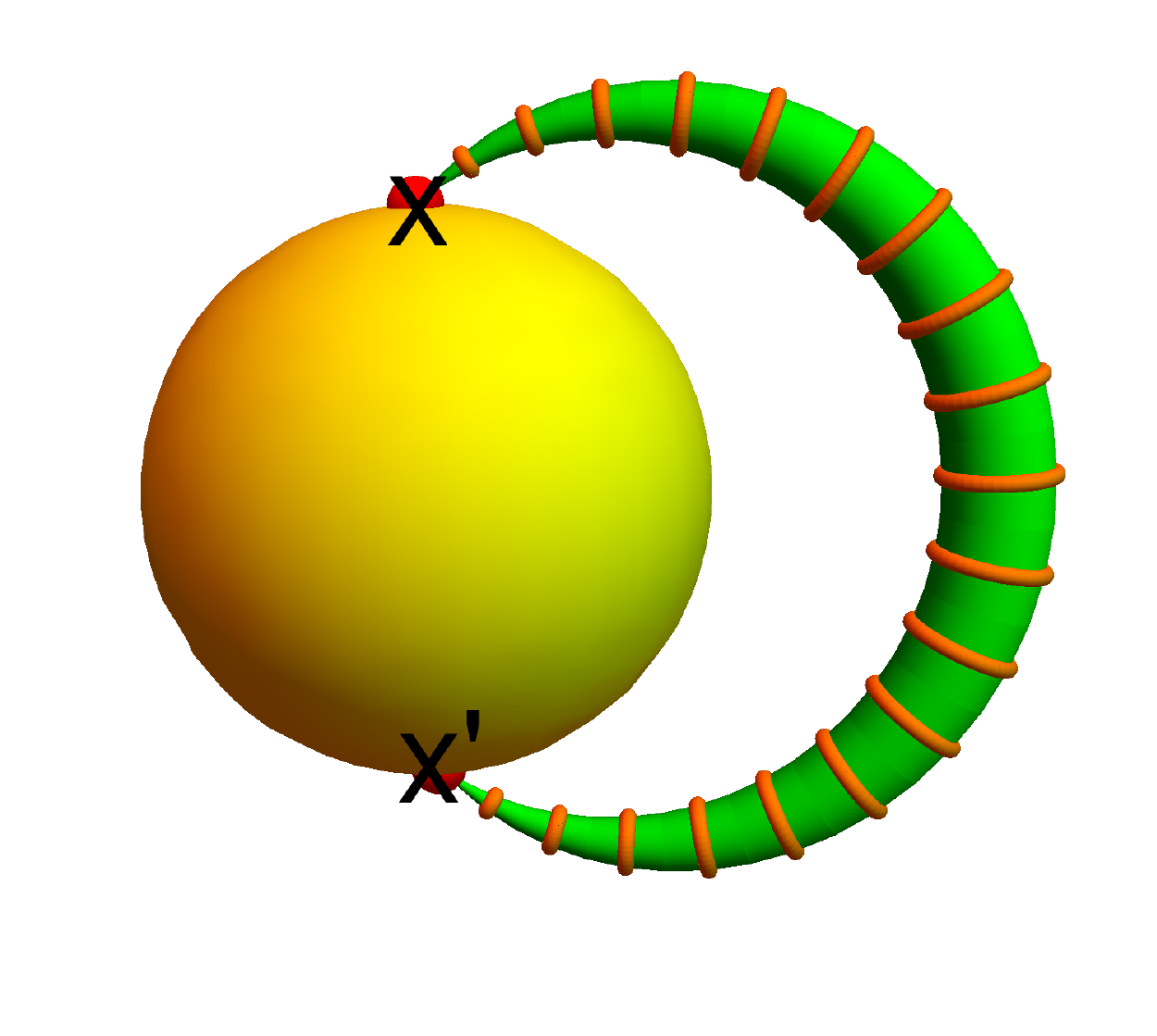}}
\scalebox{0.28}{\includegraphics{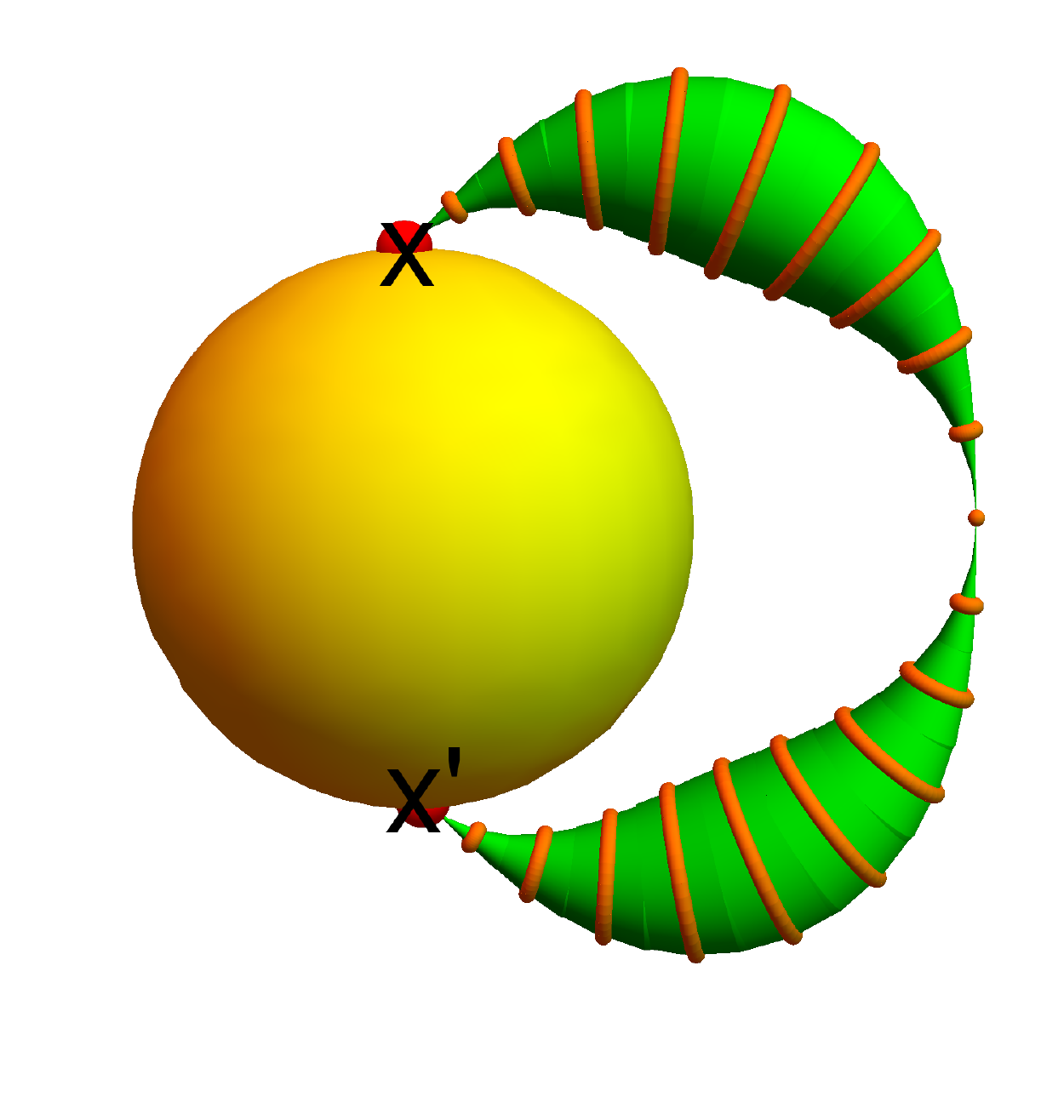}}
\caption{
The three cases of an extension from $N$ to $M$ for even-dimensional
manifolds of positive curvature.  They are 
$\mathbb{RP}^{2d}$, $\mathbb{S}^{2d}$ and $\mathbb{CP}^d$. 
}
\end{figure}

\begin{figure}[!htpb]
\scalebox{0.12}{\includegraphics{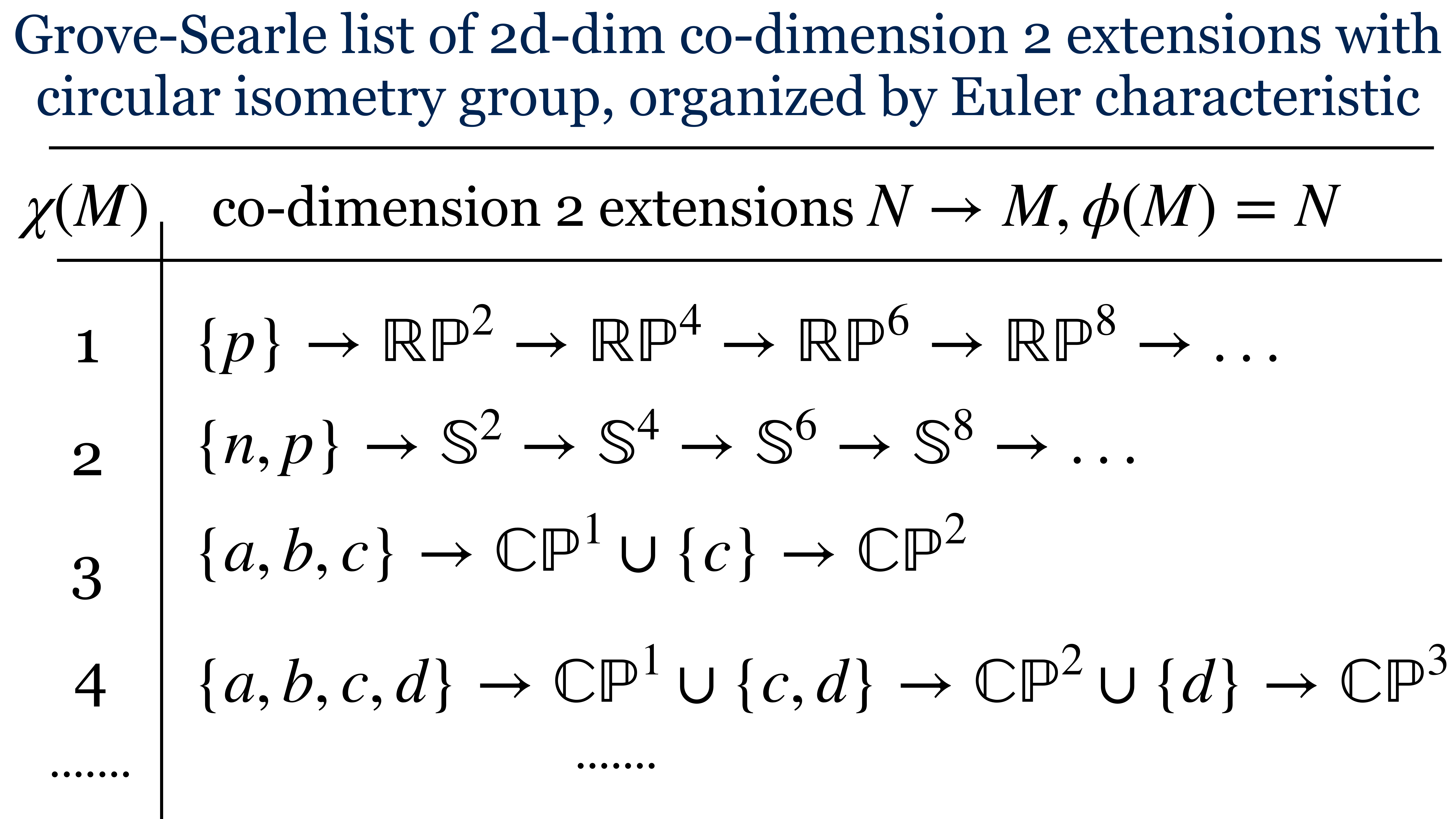}}
\scalebox{0.12}{\includegraphics{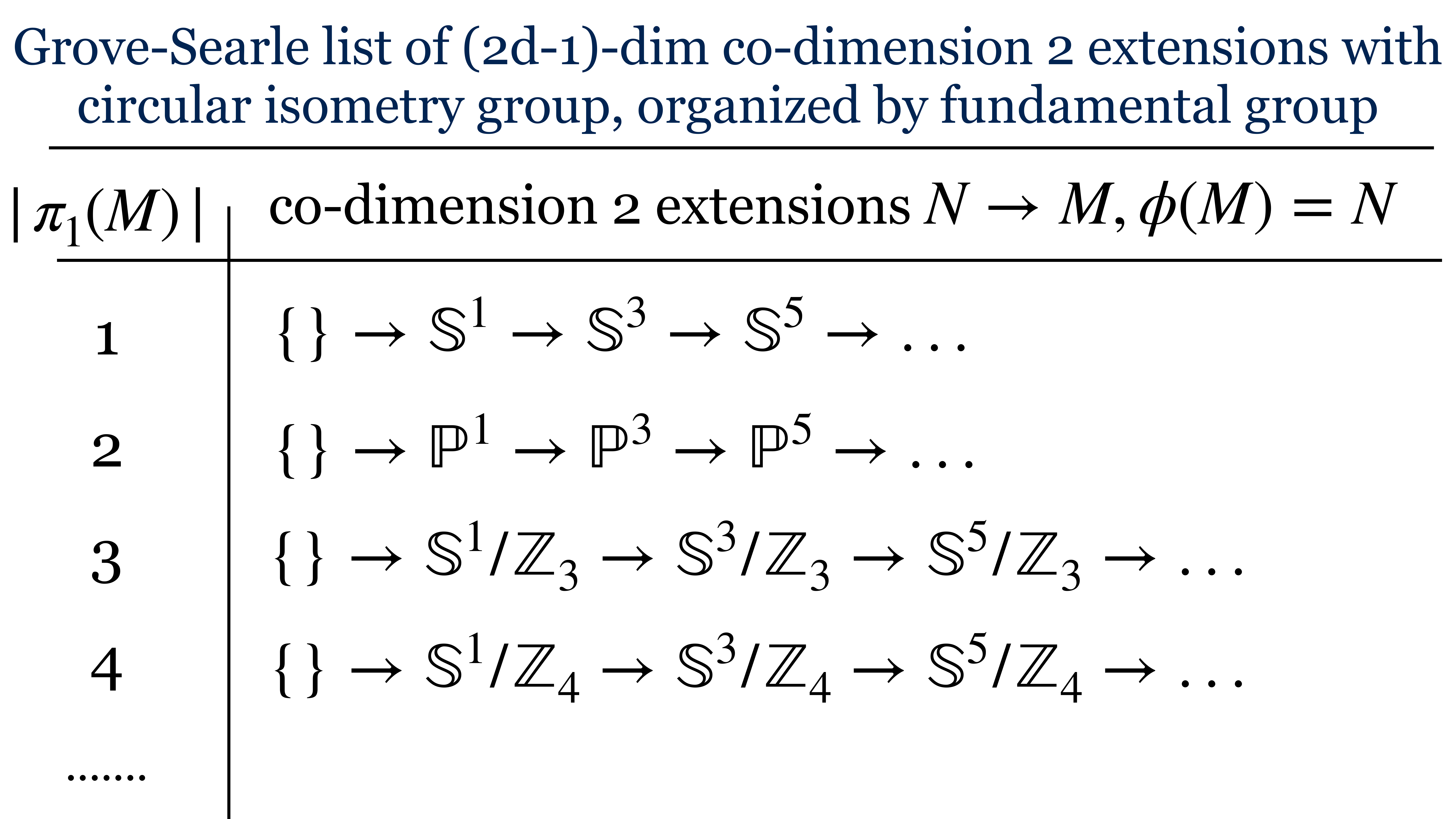}}
\label{galois1}
\caption{
$N=\phi(M)$ means that $N$ is the fixed point set
of a circular group $G$ on a connected manifold $M$.
}
\end{figure}

\begin{figure}[!htpb]
\scalebox{0.12}{\includegraphics{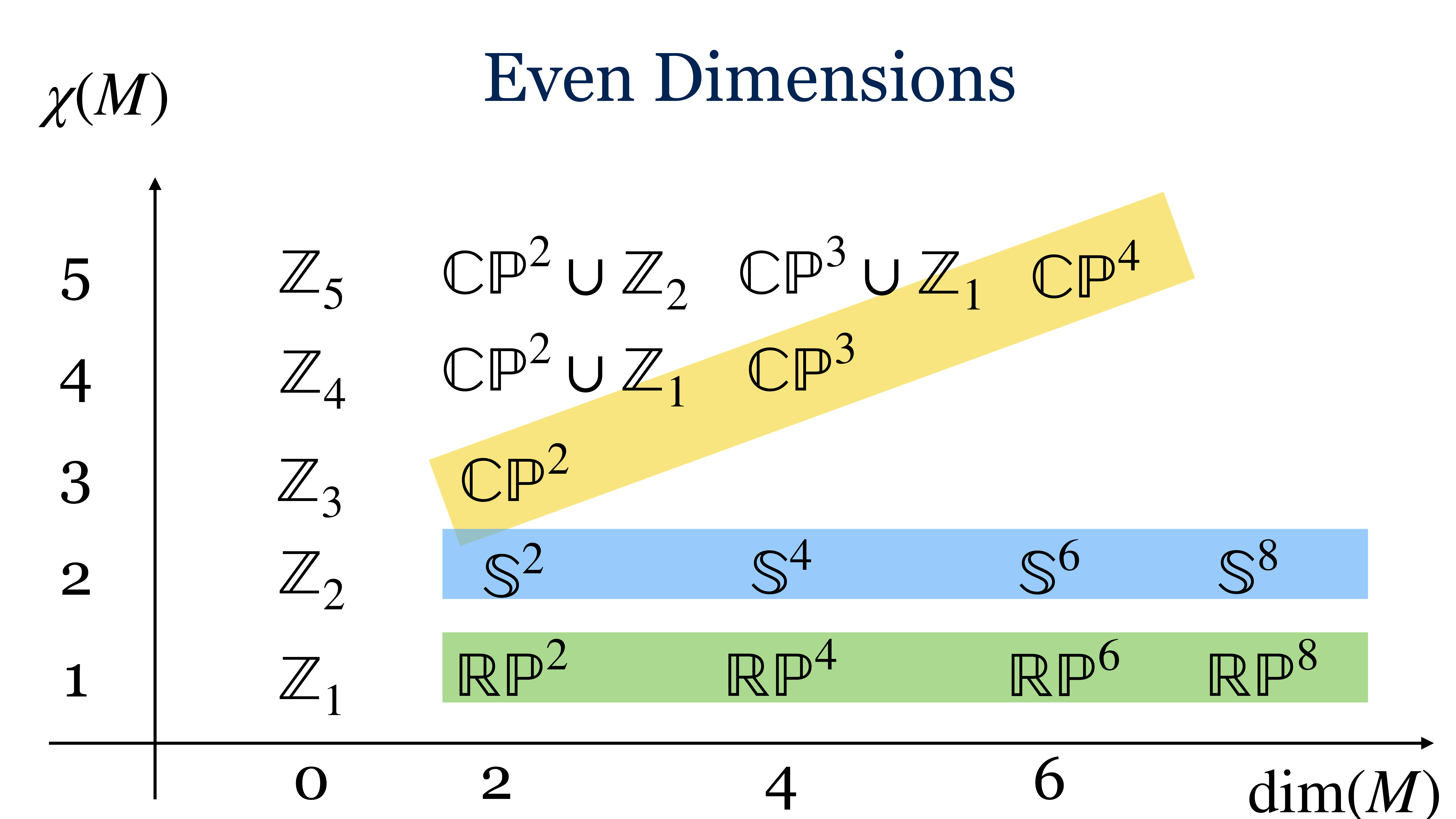}}
\scalebox{0.12}{\includegraphics{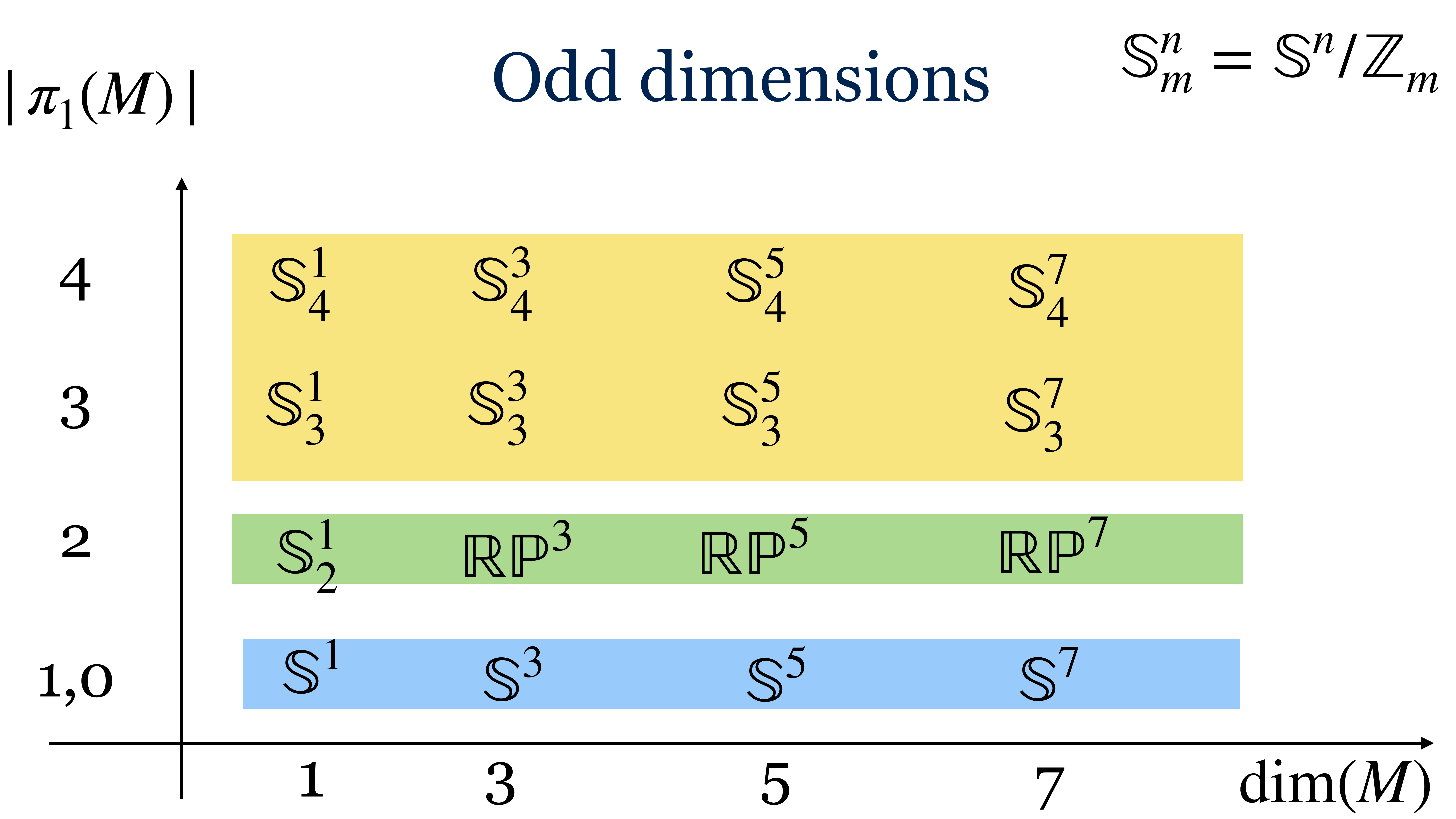}}
\label{galois2}
\caption{
An other ``periodic table" of the manifolds appearing in the
Grove-Searle theorem. The connected cases are high-lighted.
}
\end{figure}

\begin{figure}[!htpb]
\scalebox{0.12}{\includegraphics{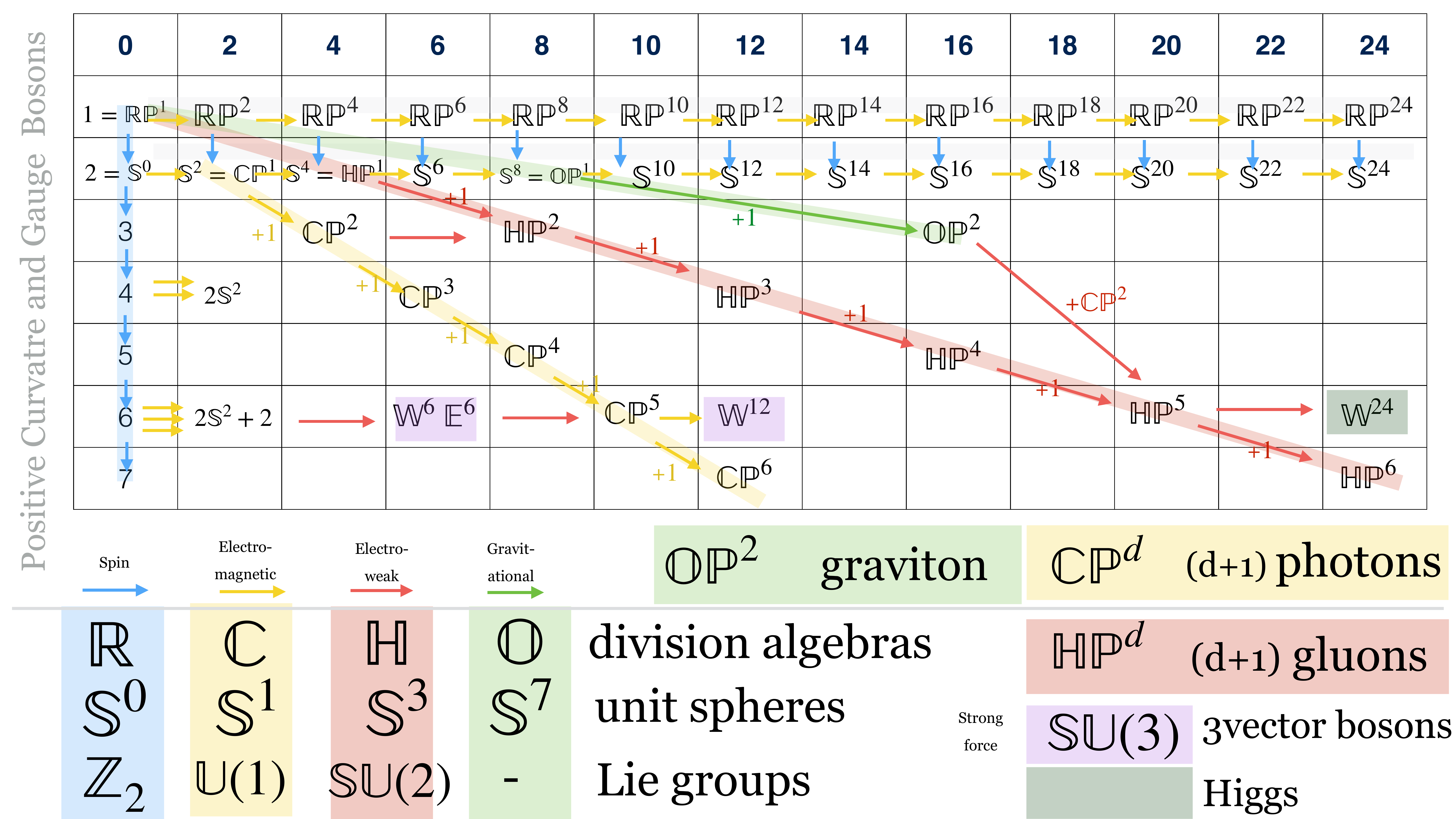}}
\scalebox{0.12}{\includegraphics{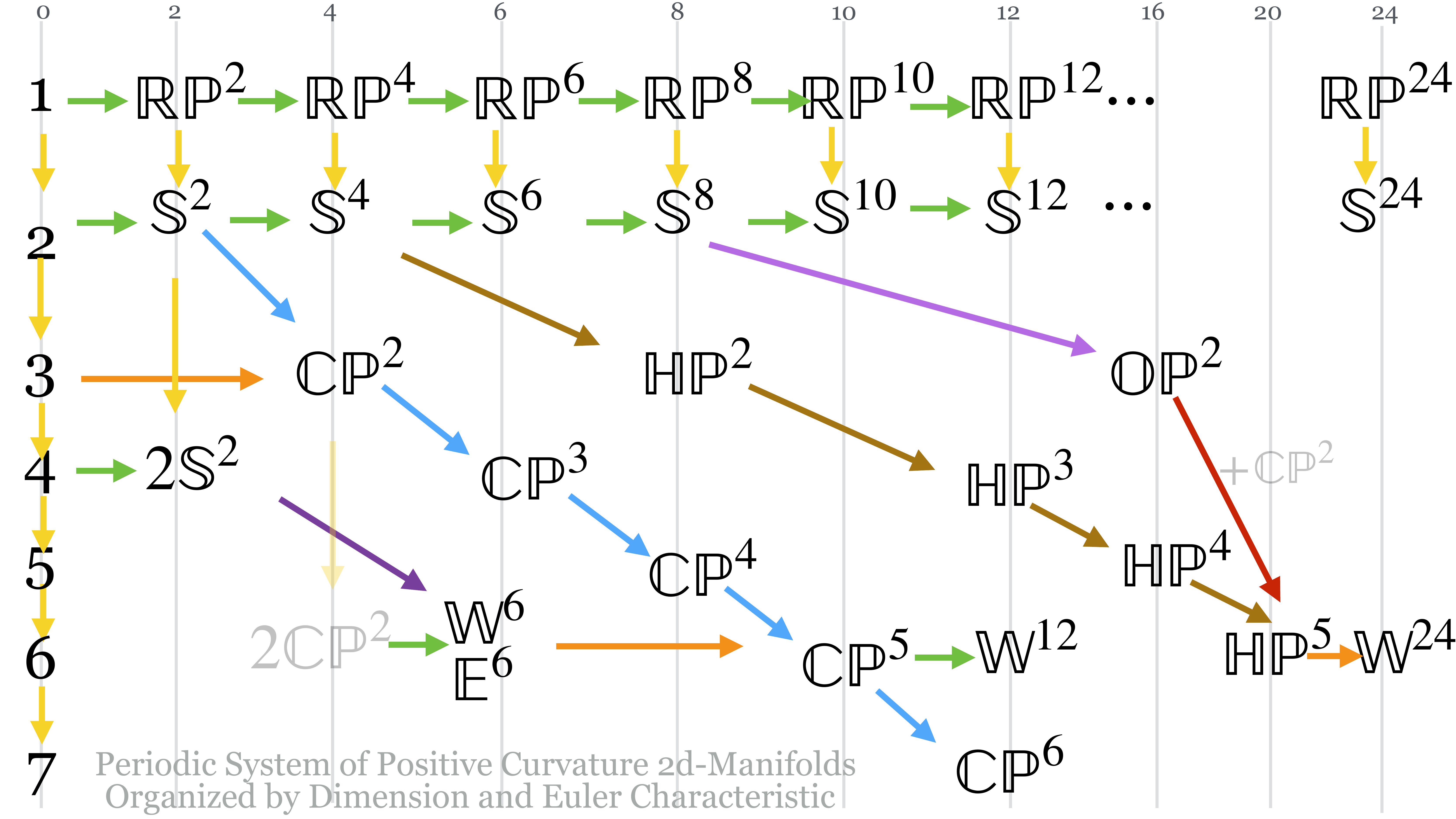}}
\label{galois2}
\caption{
Organizing all even dimensional
positive curvature manifolds in the ``Dimension-Euler characteristic" 
plane. 
}
\end{figure}

\section*{Appendix: Conner-Kobayashi-Lefschetz}

\paragraph{}
In full generality, for any Lie group $G$ action on $M$ by 
isometries on $M \in \mathcal{K}_{2d}$, the fixed point set $N$ is not
empty. This is a generalized version of a theorem of Berger \cite{Berger1961}. 
As the fixed point consists of smaller dimensional manifolds, one has 
the possibility to do reduction to smaller dimensional cases. 

\paragraph{}
The manifold $N$ can have different dimensions but it is {\bf totally geodesic} 
in $M$ meaning that any geodesic within $N$ stays in $N$. The proof is 
very simple: given two points $x,y \in N$ closer than the radius of injectivity
of $M$ (which is positive and finite for positive curvature manifolds), then
there exists a unique geodesic $\gamma$ in $M$ connecting $x$ with $y$. Assume there
is a point $z \in \gamma$ that is not in $M$, then the orbit $Gz$ is outside of $N$
and $G\gamma$ consists of a family of geodesics all having the same length connecting
$x$ with $y$. This contradicts that $x,y$ were closer than the radius of injectivity. 

\paragraph{}
Having a totally geodesic subset $N$ of $M$ assures that $N$ is a manifold.
See Theorem 5.1 in \cite{Kobayashi1972}. 
Proof: Take a point $x$ and a positive $r$ smaller than the radius of injectivity
of $M$. Then the set $U_r(x)=\exp_x(B_r(0))$ consisting of all geodesics of length $\leq r$
starting at $x$ is a chart of $M$. Then $U \cap N = \exp_x(U \cap V)$, where
$V$ is the fixed point set of $v \in T_xM$ under $G$. This shows that $U \cap F$ 
is a sub-manifold. Because every sectional curvature in $N$ is also a sectional
curvature in $M$, also each component of $N$ is a positive curvature manifold. 

\paragraph{}
The normal bundle of $N$ in $M$ can be made into a complex vector bundle (see
Theorem 5.3 in \cite{Kobayashi1972}), so that the codimension of each component
is even. Intuitively, positive curvature forces the Lie group to be odd dimensional:
Proof: let $G$ be an even dimensional Lie group. It has zero Euler characteristic
because there are vector fields on $G$ which have no equilibrium points (Poincar\'e-Hopf). 
But as the sectional curvatures are constant, it has to be a space form forcing it to 
be a sphere or projective space which both in even dimensions have positive Euler 
characteristic. As for $x \notin N$, the orbit $Gx$ must have positive curvature,
this forces the Lie group to be a sphere. There are only two cases $U(1)$ and $SU(2)$,
the $U(1)$ case because there are no curvature restrictions in one dimension. 

\paragraph{}
The fact that $\chi(M)=\chi(N)$ appears in Theorem 5.5 in \cite{Kobayashi1972}.
It is a simple fixed point theorem. Let me give a combinatorial proof using 
non-standard analysis \cite{Nelson77,NelsonSimplicity} and a discrete theorem
\cite{brouwergraph}. It takes the point of view that compact spaces can be 
modeled as finite graphs, where elements are connected if they have a distance
smaller than a given positive constant $h>0$. These graphs are not of manifold
type but homotopic to the graphs which are triangulations of $M$. 

\paragraph{}
If $(M,d)$ is a compact Riemannian manifold and $h>0$ is some small positive
infinitesimal (in the terminology of Nelsons IST, an extension of the ZFC
axiom system, this is defined and means that $h$ is a positive real number that is
smaller than any {\bf standard} positive number. The axioms IST clarify what standard means).
One can think of $h$  as a fixed Planck constant.
This defines a finite simple graph $G=(V,E)$, where $V$ is a finite set containing all standard
points in $M$ and where $(x,y)$ is in $E$ if $d(x,y)\leq \epsilon$. Now, this is a
finite simple graph. It is messy, as its dimension is huge. We do not mind however
as is homotopic to a triangulation of $M$ (homotopy is a well defined notion ported
from the continuum to the discrete. It has the same properties than the homotopy of
classical CW complexes). In particular the Whitney simplicial complex defined by $G$
has the same Betti numbers and Euler characteristic than $M$.

\paragraph{}
Now, if $T_t$ is a standard isometric circle action on $M$, then for every standard $t$
one has a {\bf graph auto-momorphism} of $G$ which is also an automorphim of the
finite abstract simplicial complex defined by it. The finite subset of $G$ which
are fixed by all $T_t$ with standard $t$, includes now of all standard points of $\phi(M)$ because
if a standard $x \in M$ satisfies $T_t(x)=x$ for all $t$, then $T_t(x)=x$ for
all standard $t$ and $x$ is a fixed point of the finite cyclic group containing
all the standard automorphisms of $G$. Now, the fact that $\phi(M)$ and $M$
have the same Euler characteristic is a consequence of a discrete fixed point theorem.
Lets remind about this result  \cite{brouwergraph}:

\paragraph{}
If an automorphism $T$ of a finite abstract simplicial complex $G$ is given,
then $T$ is just a simplicial map preserving the
incidence structure in $G$, then it induces a linear map $T_k$ on each cohomology
$H^k(G)$ (which are finite dimensional vector spaces given concretely as null-spaces
of matrices). The {\bf Lefschetz number} $L(G,T)$ is the super trace
$\sum_{k \geq 0} (-1)^k {\rm tr}(T_k)$. The index of a fixed point $x \in G$
is $i_T(x) = \omega(x) {\rm sign}(T|x)$, where ${\rm sign}(T|x)$ is the signature of the
permutation which $T$ induces on $x$ and $\omega(x) = (-1)^{\rm dim}(x)$.

\paragraph{}
The formula $\sum_{x \in G} i_T(x) = L(G,T)$ proven in \cite{brouwergraph} reduces to the definition
$\sum_{x \in G} \omega(x) = \chi(G)$ if $T$ is the identity.
The formula can be proven by heat deformation: if $U_T$ is the induced map
on forms (it is a matrix of the same size then $H$),
then $l(t)={\rm str}(\exp(-tL) U_T)$ is independent of $t$ by a discrete analog of a
theorem of McKean and Singer. But
$l(0) = {\rm str}(U_T)$ is $\sum_{T(x)=x} i_T(x)$ and  $\lim_{t \to \infty} l(t)=L(T,G)$ by the
Hodge theorem.

\paragraph{}
The Hodge theorem telling that $\chi(M) = \sum_{k \geq 0} {\rm dim}({\rm ker}(H_k)$, where $H_k$ is the block
in the Hodge Laplacian $H = D^2 = (d+d^*)^2 = d d^* + d^* d$ acting on $k$-forms
can be seen as the limiting case of the {\bf McKean-Singer heat deformation theorem}
${\rm str}(e^{-t H}) = \chi(G)$ for all $t$. In the case $t=0$, this is the
{\bf Euler-Poincar\'e theorem} for  Euler characteristic, in the case $t=+\infty$
this is the classical {\bf Hodge theorem} adapted to the discrete.
Now, both Euler-Poincar\'e and the Hodge theorem are elementary
linear algebra results in the discrete as we deal with finite matrices.

\paragraph{}
The McKean-Singer formula is understood by
seeing {\bf Dirac operator} $D=d+d^*$ induces a symmetry between even and
odd differential forms. It leads to ${\rm str}(H^k)=0$ for all $k>0$ implying
the {\bf super-symmetry statement} that the set of non-zero eigenvalues of $H$ restricted to
even forms is the same than the set of non-zero eigenvalues of $H$ restricted to
the odd forms. The {\bf heat flow washes the positive eigenvalues away}
because by nature of $H=D^2$, all eigenvalues $\lambda_k$ of $H$ are
non-negative and $e^{-t\lambda_k} \to 0$ for $t \to \infty$ if $\lambda_k>0$.
So, only the harmonic parts survive.

\bibliographystyle{plain}

\end{document}